\documentclass[11pt]{amsart}

\usepackage[margin=1.15in]{geometry}
\usepackage{amsmath,amssymb,amsfonts,amsthm,mathtools}
\usepackage{enumitem}
\usepackage{hyperref}
\usepackage{microtype}
\usepackage{mathrsfs}
\usepackage{bm}

\hypersetup{
  colorlinks=true,
  linkcolor=blue,
  citecolor=blue,
  urlcolor=blue,
  pdftitle={Sharp well-posedness and norm inflation for the periodic Camassa--Holm equation in critical Triebel--Lizorkin spaces},
  pdfauthor={Wenhai Shan and Xiao-Song Yang},
  pdfkeywords={Camassa--Holm equation, periodic domain, Triebel--Lizorkin spaces, norm inflation, Lagrangian coordinates, atomic decomposition}
}

\numberwithin{equation}{section}

\newtheorem{theorem}{Theorem}[section]
\newtheorem{proposition}[theorem]{Proposition}
\newtheorem{lemma}[theorem]{Lemma}

\theoremstyle{remark}
\newtheorem{remark}[theorem]{Remark}

\newcommand{\R}{\mathbb{R}}
\newcommand{\T}{\mathbb{T}}
\newcommand{\Z}{\mathbb{Z}}

\newcommand{\cN}{\mathcal{N}}

\newcommand{\cK}{\mathcal{K}}
\newcommand{\cP}{\mathcal{P}}
\newcommand{\eps}{\varepsilon}
\newcommand{\dd}{\,d}

\title[Sharp critical theory for periodic CH]{Sharp regularity for the periodic Camassa--Holm equation in critical Triebel--Lizorkin spaces}

\author{Wenhai Shan}
\address{School of Mathematics and Statistics, Huazhong University of Science and Technology, Wuhan, Hubei 430074, P.R. China}
\email{mathshanwenhai@163.com}

\author{Xiao-Song Yang}
\address{School of Mathematics and Statistics, Huazhong University of Science and Technology, Wuhan, Hubei 430074, P.R. China; 
	Hubei Key Laboratory of Engineering Modeling and Scientific Computing, Huazhong University of Science and Technology, Wuhan, Hubei 430074, P.R. China}
\email{yangxs@hust.edu.cn}
\thanks{Corresponding author: Xiao-Song Yang.}

\subjclass[2020]{Primary 35Q53; Secondary 35A01, 35B30, 35B44, 46E35}
\keywords{Camassa--Holm equation; Triebel--Lizorkin spaces; Local well-posedness; Norm inflation;}

\begin{document}

\begin{abstract}
We establish a sharp well-posedness and norm inflation theory for the Camassa--Holm equation in critical Triebel--Lizorkin $F^{1+1/p}_{p,q}(\mathbb{T})$. At the endpoint
$p=1$, we prove local Hadamard well-posedness for $1\le q<\infty$. In contrast, we prove norm inflation for $1<p<\infty$ and $1\le q\le\infty$. We also complement the
local well-posedness in the critical Besov spaces and higher-regularity Triebel--Lizorkin spaces. The positive results rely on a Lipschitz stability theorem for the periodic Green operator under degree-one Lagrangian flows. The negative result is based on a nested smooth atomic construction on the torus, adapted from its real-line counterpart.
\end{abstract}

\maketitle

\section{Introduction}

The Camassa--Holm equation
\begin{equation}\label{eq:CH-classical}
	u_t-u_{txx}+3uu_x=2u_xu_{xx}+uu_{xxx}
\end{equation}
was introduced as a model for unidirectional shallow-water waves
\cite{CamassaHolm1993}. It admits peakon solutions and exhibits the
characteristic phenomenon of wave breaking, whereby the solution itself
remains bounded while its spatial derivative becomes unbounded in finite
time. On the periodic domain, the classical local well-posedness theory
and the wave-breaking mechanism were developed in, among other works,
\cite{Constantin1997,ConstantinEscher1998}.

For the convenience of the analysis, equation \eqref{eq:CH-classical} can be written in the nonlocal form
\begin{equation}\label{eq:main-CH}
  \partial_tu+u\partial_xu=P_{\T}(D)\left(u^2+\frac12u_x^2\right),
  \qquad u|_{t=0}=u_0,
\end{equation}
where the nonlocal operator is the form of
\begin{equation}\label{eq:PD-def}
  P_{\T}(D)=-\partial_x(1-\partial_x^2)^{-1}_{\T}.
\end{equation}

The Besov-space theory for the Camassa--Holm equation was first
developed on the real line and torus by Danchin
\cite{Danchin2001,Danchin2003}. More precisely, local well-posedness were established in the inhomogeneous spaces
$
B^s_{p,r},1\le p\le\infty, 1\le r<\infty,s>\max\left\{1+\frac1p,\frac32\right\}.
$
On the
scaling-critical line \(s=1+1/p\), the feasible range was \(1\le p\le2\). The full critical Besov range $B^{1+1/p}_{p,1}(\R)$, $1\le p<\infty$, was later obtained by Ye, Yin and Guo \cite{YeYinGuo2023} through a Lagrangian argument. On the negative side, Guo, Liu, Molinet and Yin
\cite{GuoLiuMolinetYin2019} proved norm inflation for the
Camassa--Holm equation in $B^{1+1/p}_{p,r}(\mathbb K),
1\le p\le\infty, 1<r\le\infty,
\mathbb K\in\{\R,\T\}.
$ Their proof constructs smooth odd data with arbitrarily small critical Besov norm but an arbitrarily large negative derivative at the origin, and then combines the odd-data wave-breaking criterion with a logarithmic estimate. At the remaining periodic endpoint $B^1_{\infty,1}(\T)$, Li, Yu and Zhu \cite{LiYuZhu2025} constructed a different family of periodic initial data and proved norm inflation for the Camassa--Holm and Novikov equations.

The Triebel--Lizorkin setting is more delicate because the dyadic
summation is taken pointwise before the spatial norm. Yang \cite{Yang2015} established the local well-posedness in the inhomogeneous Triebel--Lizorkin spaces
$
F^s_{p,q}(\R),
1<p,q<\infty,
s>\max\left\{\frac32,1+\frac1p\right\}.
$ Very recently, Zhang and Yan developed transport and commutator estimates in Triebel--Lizorkin scales \cite{ZhangYan2026Transport,ZhangYan2026Commutator} and obtained a sharp real-line classification \cite{ZhangYan2026Sharp}: the equation is locally Hadamard well-posed in $F^2_{1,q}(\R)$ for $1\le q<\infty$, whereas it exhibits norm inflation in $F^{1+1/p}_{p,q}(\R)$ for $1<p<\infty$ and $1\le q\le\infty$. The key new ingredient in their ill-posedness proof is a smooth atomic construction adapted to the pointwise interaction of dyadic scales in the Triebel--Lizorkin space.

The purpose of the present paper is to develop a sharp periodic theory for the Camassa–Holm equation in critical Triebel–Lizorkin spaces and, at the same time, to fill the existing gap in the well-posedness theory for both Triebel–Lizorkin and Besov spaces on the torus. Although the overall strategy is inspired by the corresponding real-line theory, both the positive and negative arguments require genuinely periodic realizations: On the well-posedness side, the Green kernel of $(1-\partial_x^2)^{-1}_{\T}$ is the periodic sum
\begin{equation}\label{eq:periodic-kernel-intro}
	p_{\T}(x)=\sum_{m\in\Z}\frac12e^{-|x+m|},
\end{equation}
and one must prove Lipschitz stability of the associated nonlocal operator under degree-one Lagrangian flows. For norm inflation, one must verify that the real-line atomic mechanism can be realized by smooth periodic atoms with constants uniform in the dyadic scale. Once the initial data are constructed, the wave-breaking and logarithmic continuation parts follow the periodic framework of \cite{GuoLiuMolinetYin2019}.

Our first main result gives the well-posedness side of the sharp periodic classification in the Triebel–Lizorkin scale, together with the higher-regularity consequences and critical Besov.

\begin{theorem}[Local well-posedness]\label{thm:main}
Let $X^s(\T)$ be either
\begin{enumerate}[label=(\roman*)]
	\item
	\[
	X^s(\T)=B^{s}_{p,1}(\T),
	\qquad 1\le p<\infty,
	\qquad s=1+\frac1p;
	\]
	\item
	\[
	X^s(\T)=F^s_{p,q}(\T),
	\qquad 1\le p<\infty,
	\quad 1\le q<\infty,
	\quad s>1+\frac1p;
	\]\item
	\[
	X^s(\T)=F^s_{1,q}(\T),
	\quad 1\le q<\infty, s=2.
	\]
\end{enumerate}
For every $R>0$ there exists $T_R>0$ such that, whenever
$u_0\in X^s(\T)$ and $\|u_0\|_{X^s}\le R$, equation \eqref{eq:CH-classical}
has on $[0,T_R]$ a unique solution
\[
u\in C([0,T_R];X^s(\T))\cap C^1([0,T_R];X^{s-1}(\T)).
\]
The data-to-solution map is continuous from the ball
$\{u_0:\|u_0\|_{X^s}\le R\}$ into
\[
C([0,T_R];X^s(\T))\cap C^1([0,T_R];X^{s-1}(\T)).
\]
\end{theorem}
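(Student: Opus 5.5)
The plan is a Lagrangian fixed-point argument in the spirit of \cite{YeYinGuo2023,ZhangYan2026Sharp}, adapted to the torus. The key structural fact is that each space in (i)--(iii) embeds in $W^{1,\infty}(\T)$ and is a Banach algebra that is stable under composition with bi-Lipschitz diffeomorphisms. For $F^2_{1,q}$ and $B^{1+1/p}_{p,1}$ this holds because $\partial_x u$ lies in the critical space $F^{1/p}_{p,q}$ or $B^{1/p}_{p,1}$, which embeds into $C(\T)$.

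\textbf{Step 1: Lagrangian reformulation.} Let $y(t,\xi)$ solve $\partial_t y=u(t,y)$ with $y(0,\xi)=\xi$. Then $y-\xi$ is $1$-periodic, so $y$ is a degree-one map of $\T$. Writing $U=u\circ y$, equation \eqref{eq:main-CH} becomes the ODE system
\[
\partial_t y=U,\qquad \partial_t U=\big[P_{\T}(D)(u^2+\tfrac12u_x^2)\big]\circ y .
\]
Changing variables $x=y(\eta)$ in the kernel representation, we get $u_x\circ y=U_\xi/y_\xi$, and the right-hand side becomes
\[
\mathcal G(y,U)(\xi)=-\int_{\T}\partial_x p_{\T}\big(y(\xi)-y(\eta)\big)\Big(U^2+\tfrac12\frac{U_\xi^2}{y_\xi^2}\Big)(\eta)\,y_\xi(\eta)\dd\eta,
\]
with $p_{\T}$ the periodic kernel \eqref{eq:periodic-kernel-intro}.

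\textbf{Step 2: Lipschitz estimates for $\mathcal G$ (main obstacle).} I would prove that $\mathcal G$ is locally Lipschitz on the set
\[
\{(y-\mathrm{id},U)\in X^s\times X^s:\ y_\xi\ge c>0\}.
\]
I would first split $\partial_xp_{\T}$ into a smooth periodic part plus the single-copy part $-\tfrac12\mathrm{sgn}(x)e^{-|x|}$ near the origin. The smooth part is harmless: it gives a smoothing operator, bounded by the algebra and composition properties. For the singular part, differentiating in $\xi$ uses $\partial_x^2p_{\T}=p_{\T}-\delta$. The delta term yields a local expression $\big(U^2+\tfrac12U_\xi^2/y_\xi^2\big)y_\xi$, which is controlled in $X^{s-1}$ by the algebra property of $X^{s-1}\cap L^\infty$; this is where the $W^{1,\infty}$ embedding is used. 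The remaining term is again a regularizing integral operator. The difference estimates for two pairs $(y_1,U_1)$ and $(y_2,U_2)$ follow by telescoping, together with the mean value theorem applied to $e^{-|y_1(\xi)-y_1(\eta)|}-e^{-|y_2(\xi)-y_2(\eta)|}$. Here monotonicity of $y_i$ ensures that $\mathrm{sgn}(y_i(\xi)-y_i(\eta))$ coincides for both flows, so that no jump discrepancy arises; this is the point where the degree-one property is used. The delicate part is the $F^{1/p}_{p,q}$ (critical) endpoint, where I would rely on the paraproduct and commutator estimates of \cite{ZhangYan2026Transport,ZhangYan2026Commutator} transferred to $\T$ via periodization.

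\textbf{Step 3: Local solutions.} Picard iteration of the ODE system in $X^s\times X^s$ on a ball gives a unique solution on $[0,T_R]$, with $T_R$ depending only on $R$. Lipschitz dependence on the data holds at the Lagrangian level, and $y_\xi$ stays near $1$ for short time.

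\textbf{Step 4: Back to Eulerian variables.} Set $u=U\circ y^{-1}$. The composition lemma gives $u\in L^\infty X^s$ and $u_t\in C(X^{s-1})$. Uniqueness follows because any Eulerian solution in this class has a Lipschitz flow, hence produces a Lagrangian solution. Continuity of $u$ in time, and continuity of the data-to-solution map, cannot use the Lipschitz bound directly, since composition is only continuous, not Lipschitz, in $X^s$. I would handle this by a density/Bona--Smith argument: approximate $u_0$ by $S_Nu_0$, use uniform bounds together with convergence in the lower norm $X^{s-1}$, and use the fact that $q<\infty$ (respectively $r=1$) gives smallness of high-frequency tails uniformly on compact sets of data. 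This last step is standard but requires $q<\infty$, matching the hypothesis.
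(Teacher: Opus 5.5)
Your route differs from the paper's, and as written it has a genuine gap: everything after Step~3 rests on a composition lemma in $X^s$ that you neither prove nor state correctly.

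\textbf{Where the lemma is needed.} Step~4 needs $f\mapsto f\circ y^{-1}$ to be bounded and continuous on $X^s$. Only then do you get $u=U\circ y^{-1}\in C([0,T];X^s)$ and continuity of the data-to-solution map. Your uniqueness argument needs the converse lift: an arbitrary solution $u\in C([0,T];X^s)$ must yield $y-\mathrm{id}$ and $u\circ y$ in $C([0,T];X^s)$, so that uniqueness of the Picard solution in $X^s\times X^s$ applies.

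\textbf{Why your justification fails.} You appeal to stability of $X^s$ under bi-Lipschitz diffeomorphisms. This is false for $s>1$: $X^s\hookrightarrow C^1(\T)$, but $\sin(2\pi\,\cdot)\circ\phi\notin C^1$ if $\phi(0)=0$ and $\phi'$ jumps at $0$. What you actually need is stability under $y$ with $y-\mathrm{id}\in X^s$ and $y_\xi\ge c$. Writing $(f\circ y)'=(f'\circ y)\,y_\xi$ reduces this to bi-Lipschitz invariance of $X^{s-1}$, which is standard when $0<s-1<1$.

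\textbf{The $p=1$ endpoint.} For $B^2_{1,1}$ and $F^2_{1,q}$ in (i) and (iii), one has $s-1=1$. The derivative of $g\circ y$, $g\in X^{s-1}$, then lands in $B^0_{1,1}$ or $F^0_{1,q}$. For these spaces, invariance under $h\mapsto(h\circ\phi)\phi'$ with merely bi-Lipschitz $\phi$ fails or is unavailable. For $B^0_{1,1}$, take $h\equiv1$: the image is $\phi'$, which can be any bounded function $\ge\frac12$ with mean one, and $L^\infty(\T)\not\subset B^0_{1,1}(\T)$. You would have to exploit $y_\xi\in X^{s-1}$ in an essential way. Nothing in your proposal supplies this, in particular not the commutator estimates you invoke for Step~2.

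So this lemma, rather than the $X^s$-Lipschitz bound for $\mathcal G$, is the real obstacle on your route. The Lipschitz bound largely reduces, via \eqref{eq:derivative-identity}, to algebra estimates in the $\xi$ variable, since the top-order part of $\partial_\xi\mathcal G$ is the local term $A$.

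\textbf{How the paper avoids it.} The paper is organized so that no composition estimate in $X^s$ is ever needed.
\begin{itemize}
\item Existence is Eulerian. Proposition~\ref{prop:transport} together with Lemma~\ref{lem:nonlinear} gives uniform $X^s$ bounds for the smooth solutions with data $S_nu_0$. Compactness and the Fatou property follow, and Proposition~\ref{prop:transport} is applied again for time continuity.
\item Lagrangian variables enter only for uniqueness and $L^p$-stability, and only in $W^{1,p}\cap W^{1,\infty}$ (Lemmas~\ref{lem:kernel-perturbation} and~\ref{lem:Lagrange-Lipschitz}). At that level, composition with a bi-Lipschitz flow is harmless. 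Your Step~2 kernel argument (sign coincidence from monotonicity plus the mean value theorem) is exactly this estimate.
\item Continuous dependence in $X^s$ is recovered in Eulerian variables. The paper splits frequencies to get convergence in $X^{s-1}$, then applies the transport-stability Lemma~\ref{lem:transport-stability} and Gronwall to the equation for $u_x$.
\end{itemize}

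If you keep your Lagrangian estimates at the $W^{1,r}$ level and take the $X^s$ regularity from transport estimates, the gap disappears. If you insist on the full Lagrangian fixed point, you must prove the composition lemma for diffeomorphisms in $\mathrm{id}+X^s$, including the $p=1$ endpoints.
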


The second main result supplies the missing negative part on the torus.

\begin{theorem}[Norm inflation]\label{thm:norm-inflation}
Let
\[
  1<p<\infty,
  \qquad 1\le q\le\infty,
  \qquad s=1+\frac1p.
\]
For every $0<\varepsilon<1$, there exists a real-valued odd function
\[
  u_{0,\varepsilon}\in C^\infty(\T)
\]
such that
\begin{equation}\label{eq:small-initial-F}
  \|u_{0,\varepsilon}\|_{F^s_{p,q}(\T)}\le\varepsilon.
\end{equation}
The corresponding smooth solution has a finite maximal lifespan $T_\varepsilon$ satisfying
\begin{equation}\label{eq:short-lifespan}
  0<T_\varepsilon<\varepsilon,
\end{equation}
and
\begin{equation}\label{eq:norm-inflation-main}
  \limsup_{t\uparrow T_\varepsilon}
  \|u_\varepsilon(t)\|_{B^1_{\infty,\infty}(\T)}=\infty,
  \qquad
  \limsup_{t\uparrow T_\varepsilon}
  \|u_\varepsilon(t)\|_{F^s_{p,q}(\T)}=\infty.
\end{equation}
 
\end{theorem}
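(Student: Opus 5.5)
The plan follows the strategy of Guo--Liu--Molinet--Yin on the torus, with the initial data replaced by a periodic version of the nested atomic construction of Zhang--Yan. There are three steps: build the data, prove finite-time blow-up, and convert the blow-up into inflation of both norms.

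\emph{Step 1: the data.} Fix a smooth odd bump $\psi$ supported in $(-1/4,1/4)$ with $\psi'(0)=1$. For a large integer $N$ and dyadic scales $2^{j}$, $j\in[N,2N]$, I would form periodic atoms
\[
a_j(x)=2^{-j}\psi(2^{j}x),
\]
extended $1$-periodically, and set
\[
u_{0,\varepsilon}=\frac{-\eta}{N^{\beta}}\sum_{j=N}^{2N}a_j .
\]
The two parameters $\eta$ and $\beta$ are chosen so that:
\begin{itemize}
\item the $F^{1+1/p}_{p,q}$ norm is small;
\item the derivative at the origin is very negative, namely $u_0'(0)=-\eta N^{1-\beta}\to-\infty$.
\end{itemize}

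The key estimate is the $F^s_{p,q}$ bound for the nested sum. Pointwise in $x$, at $|x|\sim2^{-k}$ only the atoms with $j\le k$ are nonzero near $x$, plus Schwartz tails from the higher atoms. For $j\le k$ the Littlewood--Paley pieces satisfy $|\Delta_\ell a_j|\lesssim 2^{-j}2^{j}\langle 2^{\ell-j}\rangle^{-M}$. The $\ell^q$ sum of $2^{\ell s}|\Delta_\ell u_0|$ is then bounded by $N^{-\beta}\sum_{j\le k}2^{j/p}$, which is $\lesssim N^{-\beta}2^{k/p}$. Integrating the $p$-th power over the shells $|x|\sim2^{-k}$ gives
\[
\|u_0\|_{F^s_{p,q}}^p\lesssim N^{-\beta p}\sum_{k=N}^{2N}1=N^{1-\beta p}.
\]
So any $\beta\in(1/p,1)$ works, and such $\beta$ exist exactly because $p>1$. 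This explains the restriction to $1<p<\infty$; it works uniformly in $q\in[1,\infty]$.

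On the torus I would use the periodic Littlewood--Paley decomposition $\Delta_\ell f=\sum_m\hat\varphi(2^{-\ell}m)\hat f(m)e^{2\pi imx}$. By Poisson summation its kernel is the periodization of the real-line kernel. Hence each atom's frequency-localized pieces agree, up to errors $O(2^{-jM})$ uniform in the scale, with the real-line ones, and the real-line estimate of \cite{ZhangYan2026Sharp} transfers. I expect this step to be the main obstacle. It requires checking that periodization of both the Littlewood--Paley kernels and the atoms preserves the pointwise off-diagonal decay with constants independent of $j$ and $N$. The low-frequency block $\ell=0$ also has to be handled separately.

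\emph{Step 2: blow-up before time $\varepsilon$.} Since $u_0$ is odd and the equation preserves oddness, $u(t,0)=0$ for all $t$. Set $m(t)=u_x(t,0)$. Differentiating \eqref{eq:main-CH} and evaluating at $x=0$ gives
\[
m'=-\tfrac12m^2+\bigl[(1-\partial_x^2)^{-1}_{\T}(u^2+\tfrac12u_x^2)\bigr](0)-u^2(0)\le-\tfrac12m^2+C\|u\|_{H^1}^2 .
\]
The $H^1$ norm is conserved and is small, since $F^s_{p,q}$ with $s=1+1/p$ embeds into $H^1$ for $p\le2$; for $p>2$ one instead estimates $\|u_0\|_{H^1}$ directly from the construction. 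Because $m(0)=-\eta N^{1-\beta}$ is very negative, comparison with the Riccati equation gives $m(t)\to-\infty$ at some $T_\varepsilon\lesssim1/|m(0)|<\varepsilon$ once $N$ is large. The local smooth theory shows that $T_\varepsilon$ is the maximal lifespan.

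\emph{Step 3: inflation of both norms.} By the periodic blow-up criterion, $\int_0^{T_\varepsilon}\|u_x\|_{L^\infty}\,dt=\infty$. The logarithmic interpolation
\[
\|u_x\|_{L^\infty}\lesssim\|u\|_{B^1_{\infty,\infty}}\log\Bigl(e+\frac{\|u\|_{H^{2}}}{\|u\|_{B^1_{\infty,\infty}}}\Bigr)
\]
combined with the energy estimate $\frac{d}{dt}\|u\|_{H^2}\lesssim\|u_x\|_{L^\infty}\|u\|_{H^2}$ gives a Gronwall-type bound. If $\|u\|_{B^1_{\infty,\infty}}$ stayed bounded, this bound would keep $\int\|u_x\|_{L^\infty}$ finite, a contradiction. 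Hence $\limsup_{t\uparrow T_\varepsilon}\|u\|_{B^1_{\infty,\infty}}=\infty$. The embedding $F^{1+1/p}_{p,q}(\T)\hookrightarrow B^1_{\infty,\infty}(\T)$ then gives the second half of \eqref{eq:norm-inflation-main}.
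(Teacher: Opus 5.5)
Your proposal is correct and follows essentially the same route as the paper. The data are odd nested periodic atoms whose coefficient sequence lies in $\ell^p$ but not in $\ell^1$: your flat block $N^{-\beta}$ on $[N,2N]$ with $\beta\in(1/p,1)$ plays the role of the paper's harmonic weights $1/k$, and your Poisson-summation transfer replaces the paper's periodization lemma plus Euclidean atomic synthesis. The rest also matches: the odd-data Riccati bound $T_\varepsilon\lesssim 1/|u_0'(0)|$, the logarithmic estimate combined with the $H^2$ energy inequality, and the embedding $F^{1+1/p}_{p,q}(\T)\hookrightarrow B^1_{\infty,\infty}(\T)$.

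There are two harmless slips. First, the correct identity is $m'=-\tfrac12m^2+u^2(t,0)-\bigl[p_{\T}*(u^2+\tfrac12u_x^2)\bigr](t,0)$; you have both signs reversed. Oddness then gives $m'\le-\tfrac12m^2$ directly, so the $H^1$-smallness detour is unnecessary, although your weaker inequality is still true. Second, the Littlewood--Paley bound should read $|\Delta_\ell a_j|\lesssim 2^{-j}\langle 2^{\ell-j}\rangle^{-M}$, which is what your subsequent estimate $N^{-\beta}\sum_{j\le k}2^{j/p}$ actually uses.
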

\begin{remark} 
	At higher regularity $s>1+\frac1p$, Theorem \ref{thm:main} improves the result of Yang \cite{Yang2015} by removing the additional restriction $s>3/2$, and the endpoint indices $p=1$ and $q=1$ are also included. At the critical regularity, combining Theorems \ref{thm:main} and \ref{thm:norm-inflation} with the periodic norm-inflation results of \cite{GuoLiuMolinetYin2019,LiYuZhu2025} yields a complete classification in the Besov scale: $B^{1+1/p}_{p,1}(\T)$ is locally Hadamard well-posed for $1\le p<\infty$, whereas norm inflation holds in $B^{1+1/p}_{p,r}(\T)$ for $1\le p\le\infty$ and $1<r\le\infty$, as well as at the remaining endpoint $B^1_{\infty,1}(\T)$. In the Triebel--Lizorkin scale, we obtain the sharp finite-$p$ classification
	\[
\begin{aligned}
	p=1,\quad 1\le q<\infty
	&:\quad \text{local Hadamard well-posedness},\\
	1<p<\infty,\quad 1\le q\le\infty
	&:\quad \text{norm inflation}.
\end{aligned}
\]
The remaining Triebel--Lizorkin endpoints are $F^2_{1,\infty}(\T)$ and $F^1_{\infty,q}(\T)$ with $1\le q<\infty$. The former is not covered because low-frequency truncations are not strongly convergent in the standard $F^2_{1,\infty}$ topology, while
the latter requires a separate analysis of the endpoint \(p=\infty\). The case $F^1_{\infty,\infty}(\T)=B^1_{\infty,\infty}(\T)$ is already covered by \cite{GuoLiuMolinetYin2019}.
\end{remark}

The paper is organized as follows. Section 2 recalls periodic function spaces, transport estimates and a periodic atomic synthesis estimate. Section 3 develops the periodic nonlocal operator in degree-one Lagrangian coordinates. Section 4 proves existence, uniqueness and continuous dependence. Section 5 establishes periodic norm inflation. Throughout this paper, we use $C$ to denote universal constant which may vary from line to line. 

\section{Function spaces and some estimates on torus}

Throughout Sections 2--4, $X^s$ denotes one of the spaces listed in
Theorem~\ref{thm:main}. Let $\chi\in C_c^\infty(\mathbb R)$ be an even function satisfying
\[
0\le \chi\le 1,\qquad
\chi(\xi)=1\quad\text{for }|\xi|\le \frac34,
\qquad
\operatorname{supp}\chi
\subset
\left\{\xi\in\mathbb R:|\xi|\le\frac43\right\}.
\]
Define
\[
\varphi(\xi):=\chi\left(\frac{\xi}{2}\right)-\chi(\xi).
\]
Then $\varphi\in C_c^\infty(\mathbb R)$ is even and
\[
\operatorname{supp}\varphi
\subset
\left\{\xi\in\mathbb R:
\frac34\le|\xi|\le\frac83\right\}.
\]
The pair $(\chi,\varphi)$ will be referred to as a standard
inhomogeneous dyadic partition of unity.

For a periodic distribution $f\in\mathcal D'(\mathbb T)$, where
$\mathbb T=\mathbb R/\mathbb Z$, let
\[
\widehat f(k)
=
\int_{\mathbb T}f(x)e^{-2\pi ikx}\,dx,
\qquad k\in\mathbb Z,
\]
we define
\begin{equation}\label{eq:periodic-blocks}
	\Delta_{-1}f
	=
	\sum_{k\in\mathbb Z}
	\chi(k)\widehat f(k)e^{2\pi ikx},
\end{equation}
and, for $j\ge0$,
\begin{equation}\label{eq:periodic-high-blocks}
	\Delta_jf
	=
	\sum_{k\in\mathbb Z}
	\varphi(2^{-j}k)\widehat f(k)e^{2\pi ikx}.
\end{equation}
 We also set
\[
S_jf
:=
\sum_{\ell=-1}^{j-1}\Delta_\ell f,
\qquad j\ge0.
\]
The corresponding periodic Besov and Triebel--Lizorkin norms are
\begin{equation}\label{eq:Besov-norm}
	\|f\|_{B^s_{p,r}(\mathbb T)}
	=
	\left\|
	\left(
	2^{js}\|\Delta_jf\|_{L^p(\mathbb T)}
	\right)_{j\ge-1}
	\right\|_{\ell^r},
\end{equation}
and
\begin{equation}\label{eq:F-norm}
	\|f\|_{F^s_{p,q}(\mathbb T)}
	=
	\left\|
	\left(
	\sum_{j\ge-1}
	2^{jsq}|\Delta_jf(x)|^q
	\right)^{1/q}
	\right\|_{L^p_x(\mathbb T)},
\end{equation}
with the usual supremum modifications when $r=\infty$ or
$q=\infty$, respectively.  

We shall use the following embeddings and Moser estimates, see \cite{GuoLi2021,RunstSickel1996,SickelTriebel1995}. 
\begin{lemma}
	The spaces occurring in Theorem \ref{thm:main} satisfy
	\begin{equation}\label{eq:X-embedding}
		X^s(\T)\hookrightarrow W^{1,\infty}(\T),
		\qquad X^{s-1}(\T)\hookrightarrow L^\infty(\T).
	\end{equation}
In addition, for $\sigma>0$ and $f,g\in X^\sigma(\T)\cap L^\infty(\T)$, one has
	\begin{equation}\label{eq:Moser}
		\|fg\|_{X^\sigma}\le C\big(\|f\|_{L^\infty}\|g\|_{X^\sigma}
		+\|g\|_{L^\infty}\|f\|_{X^\sigma}\big).
	\end{equation}
Therefore, $X^{s-1}(\T)$ is a Banach algebra.
\end{lemma}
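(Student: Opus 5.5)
The lemma makes two claims: the embeddings \eqref{eq:X-embedding} and the Moser estimate \eqref{eq:Moser}. I would treat them separately, reducing everything to standard Littlewood--Paley facts on $\T$.

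\textbf{Embeddings.} The plan is to prove $X^{s-1}(\T)\hookrightarrow L^\infty(\T)$ first. The statement $X^s\hookrightarrow W^{1,\infty}$ then follows because $\partial_x:X^s\to X^{s-1}$ is bounded (Bernstein on each block) and $X^s\hookrightarrow X^{s-1}\hookrightarrow L^\infty$. For $L^\infty$ control it suffices to show $\sum_j\|\Delta_jf\|_{L^\infty}\lesssim\|f\|_{X^{s-1}}$.

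\begin{enumerate}[label=(\alph*)]
\item For the Besov case $B^{1/p}_{p,1}(\T)$, the periodic Bernstein inequality gives $\|\Delta_jf\|_{L^\infty}\le C2^{j/p}\|\Delta_jf\|_{L^p}$, and summing yields exactly the $B^{1/p}_{p,1}$ norm.
\item For $F^{s-1}_{p,q}$ with $s-1>1/p$, I would use $F^{s-1}_{p,q}\hookrightarrow B^{s-1}_{p,\infty}\hookrightarrow B^{1/p+\delta}_{p,\infty}\hookrightarrow B^{1/p}_{p,1}$ and then apply (a).
\item For $F^1_{1,q}$ I would invoke the Jawerth--Franke type embedding $F^{1}_{1,q}(\T)\hookrightarrow B^{1}_{1,1}(\T)$. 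This holds for every $q$, since $F^{s}_{p,q}\hookrightarrow B^{s}_{p,p}$ for the appropriate $p$; alternatively, use the sharper $F^1_{1,\infty}\hookrightarrow B^{0}_{\infty,1}$ from Sickel--Triebel. Then apply (a) with $p=1$.
\end{enumerate}

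Periodic versions of these embeddings follow from the real-line ones by the standard localization/periodization argument of \cite{SickelTriebel1995}.

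\textbf{Moser estimate.} I would use Bony's decomposition $fg=T_fg+T_gf+R(f,g)$.

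\begin{enumerate}[label=(\alph*)]
\item The paraproduct $T_fg=\sum_j S_{j-1}f\,\Delta_jg$ has frequency-localized pieces in annuli $\sim2^j$. Using the pointwise bound $|S_{j-1}f|\le C\|f\|_{L^\infty}$ together with the standard lemma that a sequence with spectra in dyadic annuli is controlled in $F^\sigma_{p,q}$ (resp. $B^\sigma_{p,r}$) by $\|(2^{j\sigma}u_j)\|_{L^p\ell^q}$, I get $\|T_fg\|_{X^\sigma}\lesssim\|f\|_{L^\infty}\|g\|_{X^\sigma}$.
\item The term $T_gf$ is handled symmetrically.
\item The remainder $R(f,g)=\sum_j\Delta_jf\,\tilde\Delta_jg$ has pieces with spectra only in balls $|\xi|\lesssim2^j$. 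For these, the ball version of the same lemma requires $\sigma>\max(0,1/p-1)$ in the $F$ case. This threshold is where the hypotheses matter: for $p\ge1$ it reduces to $\sigma>0$, which is exactly what is assumed. It then gives $\|R\|_{X^\sigma}\lesssim\|f\|_{L^\infty}\|g\|_{X^\sigma}$.
\end{enumerate}

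All the lemmas used here are Fourier-multiplier and maximal-function statements (Fefferman--Stein vector-valued maximal inequality). For $p=1$ the Fefferman--Stein inequality fails, so I would use the Peetre maximal function with exponent $r<1$ instead; this is the standard route in \cite{RunstSickel1996}.

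\textbf{Algebra property.} Applying \eqref{eq:Moser} with $\sigma=s-1>0$ and combining it with $X^{s-1}\hookrightarrow L^\infty$ gives $\|fg\|_{X^{s-1}}\le C\|f\|_{X^{s-1}}\|g\|_{X^{s-1}}$, so $X^{s-1}(\T)$ is a Banach algebra.

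\textbf{Main obstacle.} I expect two steps to need the most care: the endpoint embedding $F^1_{1,q}(\T)\hookrightarrow L^\infty$, and the $p=1$ maximal-function estimates in the remainder term (c). For the embedding, I would first try the Jawerth--Franke route via periodization. For the remainder, I would rely on the Peetre maximal function as above.
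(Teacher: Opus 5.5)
\textbf{Where the proposal stands.} The paper does not prove this lemma; it only cites Runst--Sickel, Sickel--Triebel and Guo--Li. Your sketch reconstructs the standard argument behind those references: Bernstein plus elementary embeddings for the Besov and $s-1>1/p$ cases, Bony's decomposition with the annulus/ball lemmas (Peetre maximal functions at $p=1$) for the Moser estimate, and then the algebra property from $\sigma=s-1>0$. The Moser part and your embedding steps (a), (b) are fine, with two small corrections:
\begin{itemize}
\item The ball lemma in $F^\sigma_{p,q}$ needs $\sigma>\max(0,1/p-1,1/q-1)$. This still reduces to $\sigma>0$ because $p,q\ge1$.
\item Boundedness of $\partial_x:F^s_{p,q}\to F^{s-1}_{p,q}$ is the lifting property, proved by a maximal-function argument. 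Blockwise Bernstein only handles the Besov case.
\end{itemize}

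\textbf{The gap: your primary route in (c) is false.} You claim $F^1_{1,q}(\T)\hookrightarrow B^1_{1,1}(\T)$ for every $q$. This fails for every $q>1$. The general embedding is only $F^\sigma_{p,q}\hookrightarrow B^\sigma_{p,\max(p,q)}$, so $F^\sigma_{p,q}\hookrightarrow B^\sigma_{p,p}$ requires $q\le p$, i.e.\ $q=1$ when $p=1$. Indeed, since $B^1_{1,1}=F^1_{1,1}\hookrightarrow F^1_{1,q}$ always holds, your embedding would force $F^1_{1,q}=F^1_{1,1}$. That contradicts the strict monotonicity of $F^s_{p,q}$ in $q$. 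For $1<q<\infty$ you only get $F^1_{1,q}\hookrightarrow B^1_{1,q}$, and $B^1_{1,q}\not\hookrightarrow L^\infty$ for $q>1$. So this route gives neither $X^{s-1}\hookrightarrow L^\infty$ nor $X^s\hookrightarrow W^{1,\infty}$ in case (iii) with $q>1$, which is exactly the new endpoint case of the theorem.

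It is also not a Jawerth--Franke embedding, since Jawerth's embedding trades integrability for smoothness:
\[
F^{s_0}_{p_0,q}\hookrightarrow B^{s_1}_{p_1,p_0},\qquad p_0<p_1\le\infty,\quad s_0-\tfrac1{p_0}=s_1-\tfrac1{p_1},\quad\text{any } q.
\]

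\textbf{The fix.} Take $(s_0,p_0,p_1)=(1,1,\infty)$ in Jawerth's embedding. This gives $F^1_{1,q}\hookrightarrow F^1_{1,\infty}\hookrightarrow B^0_{\infty,1}$, which is your ``alternative'' and is the Sickel--Triebel fact that $F^{1/p}_{p,q}\hookrightarrow L^\infty$ iff $p\le1$. Use this as the argument for (c), not as a fallback. You then do not ``apply (a) with $p=1$''; you conclude directly from $\|f\|_{L^\infty}\le\sum_j\|\Delta_jf\|_{L^\infty}=\|f\|_{B^0_{\infty,1}}$. With this replacement your proof is complete, modulo the standard periodic Littlewood--Paley lemmas you cite.
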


We shall use the following transport estimate. In the Besov scale it is classical; see \cite{Danchin2005,BCD2011}. In the Triebel--Lizorkin scale it is the one-dimensional periodic counterpart of the commutator estimate and transport theorems in \cite[Theorems 1.1--1.3]{ZhangYan2026Commutator}; see also \cite{ZhangYan2026Transport}. 

\begin{proposition}[Periodic transport estimate]\label{prop:transport}
Let $\sigma\in\{s,s-1\}$. Suppose that
\begin{equation}\label{eq:transport-eqn}
  \partial_t f+a\partial_x f=g,
  \qquad f(0)=f_0,
\end{equation}
where
\[
  a\in L^1(0,T;X^\sigma(\T)),\qquad
  f_0\in X^\sigma(\T),\qquad
  g\in L^1(0,T;X^\sigma(\T)).
\]
Then
\begin{equation}\label{eq:transport-est}
  \|f(t)\|_{X^\sigma}
  \le C\exp\left(C\int_0^t\|a(\tau)\|_{X^\sigma}\dd\tau\right)
  \left(\|f_0\|_{X^\sigma}+\int_0^t\|g(\tau)\|_{X^\sigma}\dd\tau\right).
\end{equation}
Moreover, $f\in C([0,T];X^\sigma(\T))$.
\end{proposition}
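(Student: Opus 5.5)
We prove the transport estimate \eqref{eq:transport-est} by the standard Littlewood--Paley commutator method, adapted to the periodic blocks \eqref{eq:periodic-blocks}--\eqref{eq:periodic-high-blocks}. First assume $a,f_0,g$ smooth, so that $f$ is smooth and is given by characteristics. The general case then follows by mollifying $a$ (e.g.\ replacing it by $S_na$), applying the uniform bound, and passing to the limit. Localizing the equation gives, for every $j\ge-1$,
\[
\partial_t\Delta_jf+a\,\partial_x\Delta_jf=\Delta_jg+R_j,\qquad R_j:=[a,\Delta_j]\partial_xf .
\]
We integrate along the flow $\psi$ of $a$, which is a degree-one diffeomorphism of $\T$ with $\|\partial_x\psi^{\pm1}\|_{L^\infty}\le \exp(\int_0^t\|\partial_xa\|_{L^\infty})$. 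This yields the pointwise representation
\[
\Delta_jf(t,\psi(t,x))=\Delta_jf_0(x)+\int_0^t(\Delta_jg+R_j)(\tau,\psi(\tau,x))\,d\tau .
\]

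For the Besov choice of $X^\sigma$ we take $L^p$ norms at each fixed $j$, use the Jacobian bound to undo the composition, multiply by $2^{j\sigma}$ and sum in $\ell^1$. For the Triebel--Lizorkin choices we first take the $\ell^q_j$-norm pointwise in $x$. Because the same flow $\psi$ is used for all $j$, Minkowski's inequality gives
\[
\Big(\sum_j2^{j\sigma q}|\Delta_jf(t,\psi(t,x))|^q\Big)^{1/q}\le \Big(\sum_j2^{j\sigma q}|\Delta_jf_0(x)|^q\Big)^{1/q}+\int_0^t\Big(\sum_j2^{j\sigma q}|(\Delta_jg+R_j)(\tau,\psi(\tau,x))|^q\Big)^{1/q}d\tau .
\]
Taking $L^p_x$ norms and changing variables with the Jacobian bound then reduces everything to a commutator estimate. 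In both scales this estimate is
\[
\big\|(2^{j\sigma}R_j)_j\big\|_{L^p(\ell^q)}\ \text{or}\ \big\|(2^{j\sigma}\|R_j\|_{L^p})_j\big\|_{\ell^1}\ \le C\,\|\partial_xa\|_{L^\infty\cap X^{\sigma-1}}\|f\|_{X^\sigma}\lesssim \|a\|_{X^\sigma}\|f\|_{X^\sigma}\quad(\text{and analogously for }\sigma=s-1),
\]
where we use the embedding \eqref{eq:X-embedding}. Gronwall's lemma then gives \eqref{eq:transport-est}.

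The main obstacle is this commutator bound in $F^\sigma_{p,q}(\T)$, especially at the endpoints $p=1$, $q=1$ and $\sigma=s-1=1/p$. We handle it with Bony's decomposition, $[a,\Delta_j]\partial_xf=[T_a,\Delta_j]\partial_xf+\Delta_j$-paraproduct remainders.

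\emph{Paraproduct commutator.} The term $[T_a,\Delta_j]\partial_xf$ is written through the kernel of $\Delta_j$. On $\T$ this kernel is the periodization $\check\varphi_j^{\T}(x)=\sum_m2^j\check\varphi(2^j(x+m))$, and its first moment is bounded uniformly in $j$. The mean value theorem then bounds the term pointwise by $\|\partial_xa\|_{L^\infty}$ times Peetre maximal functions of $\Delta_{j'}f$ with $|j'-j|\le4$. The Fefferman--Stein vector-valued maximal inequality (valid for $1<p<\infty$, and replaced by the Peetre maximal function with $r<\min(p,q)$ otherwise) closes the bound.

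\emph{Remaining terms.} The terms $T_{\partial_xf}a$ and $R(a,\partial_xf)$ are estimated with the Moser estimate \eqref{eq:Moser} and standard $F$-space paraproduct bounds. In the critical case these require $\partial_xa\in X^{s-1}\hookrightarrow L^\infty$.

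This is precisely the one-dimensional periodic transcription of \cite[Theorems 1.1--1.3]{ZhangYan2026Commutator}. The only new checks are that periodization preserves the kernel moment bounds and the maximal-function inequalities on $\T$ uniformly in $j$. Both follow from the rapid decay of $\check\varphi$.

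Finally, continuity in time holds because the high-frequency tails $\|(I-S_N)f(t)\|_{X^\sigma}$ are small uniformly in $t$. This uses $q<\infty$ (or $r=1$) and applies the same estimate to $(I-S_N)f$. Each $S_Nf$ is continuous in time, so $f\in C([0,T];X^\sigma)$.
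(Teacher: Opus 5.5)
Your route is the one the paper relies on. The paper gives no argument of its own: it cites Danchin and Bahouri--Chemin--Danchin for the Besov case, and Zhang--Yan's commutator and transport theorems for the Triebel--Lizorkin case. Your steps are the standard content of those references:
\begin{itemize}
\item represent each $\Delta_jf$ along a single flow;
\item apply Minkowski in $\ell^q$ and change variables;
\item bound the commutator via Bony's decomposition and Peetre maximal functions.
\end{itemize}
For $\sigma=s$ the sketch is sound. The Jacobian is controlled by $\int_0^t\|\partial_xa\|_{L^\infty}\lesssim\int_0^t\|a\|_{X^s}$. The commutator is controlled by $\|\partial_xa\|_{L^\infty}\|f\|_{X^s}+\|\partial_xf\|_{L^\infty}\|a\|_{X^s}\lesssim\|a\|_{X^s}\|f\|_{X^s}$, using \eqref{eq:X-embedding}.

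The step that fails is the parenthetical ``and analogously for $\sigma=s-1$''. There, neither the flow bound $\|\partial_x\psi^{\pm1}\|_{L^\infty}\le\exp(\int_0^t\|\partial_xa\|_{L^\infty})$ nor the commutator bound can be expressed through $\|a\|_{X^{s-1}}$. In general, and in particular in the critical cases, $X^{s-1}\not\hookrightarrow W^{1,\infty}$. For instance, the paraproduct piece $\Delta_jT_{\partial_xf}a$ is naturally controlled via
\[
2^{k(s-1)}|S_{k-1}\partial_xf|\,|\Delta_ka|\lesssim\|f\|_{L^\infty}\,2^{ks}|\Delta_ka|,
\]
that is, by $\|f\|_{X^{s-1}}\|a\|_{X^s}$, which requires $a\in X^s$.

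No better argument can repair this, because with only $a\in L^1(0,T;X^{s-1})$ the estimate is false in the critical cases. Take $a(x)=-\operatorname{sign}(x)|x|^{\alpha}$ near $0$ with $0<\alpha<1$, cut off and periodized. It belongs to $B^{1/p}_{p,1}(\T)$ and to $F^{1}_{1,q}(\T)$. However, characteristics starting at $x_0$ reach $0$ at time $|x_0|^{1-\alpha}/(1-\alpha)$. Take $g=0$ and $f_0(x)=x$ near $0$. The solution given by characteristics is a distributional solution, since $a(0)=0$. It has a jump at $0$ for all small $t>0$, so it leaves $X^{s-1}\subset C(\T)$.

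What your proof actually establishes is \eqref{eq:transport-est} under the hypothesis $a\in L^1(0,T;X^s)$, with $\|a\|_{X^s}$ in the exponential, for both values of $\sigma$. This is also all the paper uses: in Lemma~\ref{lem:transport-stability} and in \eqref{eq:zn-eqn} the coefficients $A^n$ and $u^n$ are bounded in $X^s$. You should state and prove the proposition in that form.
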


The following assertions are standard consequences of the periodic Littlewood--Paley characterization, see \cite{BCD2011,Triebel1983,Triebel1992}.
\begin{lemma} \label{lem:low-frequency} For any $\ell\ge0$ and integer $N\geq 0$,
\begin{equation}\label{eq:SN-smoothing}
  \|S_N f\|_{X^{s+\ell}}\le C2^{N\ell}\|f\|_{X^s}.
\end{equation}
Moreover,
\begin{equation}\label{eq:SN-convergence}
  S_Nf\to f\quad\text{in }X^s(\T),
\end{equation}
and for any $\eps>0$,
\begin{equation}\label{eq:compact-embedding}
  X^s(\T)\hookrightarrow\hookrightarrow X^{s-\eps}(\T).
\end{equation}
\end{lemma}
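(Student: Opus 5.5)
The three assertions of Lemma \ref{lem:low-frequency} will be verified directly from the periodic Littlewood--Paley definitions \eqref{eq:Besov-norm}--\eqref{eq:F-norm}. All three rest on one pointwise fact about the blocks. Since $\varphi(2^{-j}\cdot)$ is supported in $\{\tfrac34 2^j\le|\xi|\le\tfrac83 2^j\}$, we have $\Delta_\ell S_N=\Delta_\ell$ for $\ell\le N-2$, $\Delta_\ell S_N=0$ for $\ell\ge N+1$, and $\Delta_\ell S_N$ is a bounded Fourier multiplier applied to $\Delta_\ell$ for $\ell\in\{N-1,N\}$. For the multiplier step we use the periodic transference principle (Poisson summation): a multiplier $m(2^{-j}k)$ with $m\in C_c^\infty$ acts on $\T$ by convolution with the periodization of $2^j\check m(2^j\cdot)$. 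This kernel has $L^1(\T)$ norm at most $\|\check m\|_{L^1(\R)}$ and is dominated by the Hardy--Littlewood maximal function uniformly in $j$. Hence
\[
|\Delta_\ell S_Nf(x)|\le C\,\mathcal M(\Delta_\ell f)(x)
\]
and, more sharply, the Peetre maximal bound $|\Delta_\ell S_N f|\le C\,\Delta_\ell^{*,a}f$ holds with constants independent of $N$ and $\ell$.

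\textbf{Smoothing \eqref{eq:SN-smoothing}.} Only blocks with $\ell\le N$ survive, so $2^{\ell(s+\ell')}|\Delta_\ell S_Nf|\le 2^{N\ell'}2^{\ell s}|\Delta_\ell S_N f|$, where $\ell'\ge0$ denotes the extra smoothness. In the Besov case one takes $L^p$ norms, uses the $L^1$-bounded kernel, and sums in $\ell^1$. In the Triebel--Lizorkin case one takes the $\ell^q$ sum pointwise, bounds the two boundary blocks by Peetre maximal functions, and applies the periodic Fefferman--Stein/Peetre inequality to obtain $\|(2^{\ell s}\Delta_\ell^{*,a}f)\|_{L^p\ell^q}\le C\|f\|_{F^s_{p,q}}$. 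This inequality holds for $a>1/\min(p,q)$ and is valid for all $0<p<\infty$, $0<q\le\infty$, hence also at $p=1$.

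\textbf{Convergence \eqref{eq:SN-convergence}.} Write $f-S_Nf=\sum_{\ell\ge N-1}\Delta_\ell(f-S_Nf)$. By the support analysis, its blocks are controlled pointwise by $\Delta_\ell f$ (or its Peetre maximal function near $\ell=N$) for $\ell\ge N-1$ and vanish otherwise. Thus $\|f-S_Nf\|_{X^s}$ is bounded by $C$ times the tail norm
\[
\Big\|\Big(\sum_{\ell\ge N-2}2^{\ell sq}|\Delta_\ell^{*,a}f|^q\Big)^{1/q}\Big\|_{L^p}.
\]
Since $q<\infty$, the inner sum decreases pointwise to $0$. It is dominated by the full sum, which lies in $L^p$ by the maximal inequality, so dominated convergence gives the tail $\to0$ because $p<\infty$. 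The Besov case with $r=1$ is identical with the sum outside the norm. This is precisely where $q<\infty$ and $p<\infty$ enter, consistent with the remark on $F^2_{1,\infty}$. The main technical point is the uniformity of the maximal-function bounds on $\T$, including $p=1$. I would handle it by transference from $\R$, using that $f$ is trigonometric-polynomial-approximable blockwise and that each $\Delta_\ell f$ is a trigonometric polynomial of degree at most $2^{\ell+2}$, to which the Peetre inequality applies.

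\textbf{Compactness \eqref{eq:compact-embedding}.} Write the identity as $S_N+(I-S_N)$. For the high part, the crude bound $2^{\ell(s-\eps)}\le2^{-N\eps}2^{\ell s}$ for $\ell\ge N-2$ gives $\|(I-S_N)f\|_{X^{s-\eps}}\le C2^{-N\eps}\|f\|_{X^s}$, uniformly on bounded sets. For the low part, $S_N$ has range in the finite-dimensional space of trigonometric polynomials of degree at most $2^{N+2}$, and by \eqref{eq:SN-smoothing} it is bounded there. So $S_N$ is a compact operator, and the embedding is an operator-norm limit of compact operators, hence compact.
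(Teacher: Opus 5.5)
Your argument is correct and is exactly the standard periodic Littlewood--Paley derivation that the paper invokes without proof; it only cites the literature. The block bookkeeping ($\Delta_\ell S_N=\Delta_\ell$ for $\ell\le N-2$, $\Delta_\ell S_N=0$ for $\ell\ge N+1$, a fixed multiplier otherwise), the tail and dominated-convergence argument using $p,q<\infty$, and the finite-rank-plus-$O(2^{-N\eps})$ proof of compactness are all sound. A minor simplification: only the two boundary blocks $\ell\in\{N-1,N\}$ are nontrivial and $p,q\ge1$, so you can handle them with the triangle inequality in $L^p(\ell^q)$ and the $L^1(\T)$ kernel bound. The periodic Peetre/Fefferman--Stein inequality, which you flagged as the main technical point, is then not needed.
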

We also need a coefficient-stability result in the proof of continuous dependence.

\begin{lemma}[Transport stability]\label{lem:transport-stability}
	Let
	\[
	\partial_ta^n+A^n\partial_xa^n=f,\qquad a^n(0)=a_0,
	\]
	where \(a_0\in X^{s-1}\), \(f\in L^1(0,T;X^{s-1})\),
	\[
	\sup_n\|A^n(t)\|_{X^s}\le\beta(t),\quad \beta\in L^1(0,T),
	\qquad
	A^n\to A^\infty\quad\hbox{in }L^1(0,T;X^{s-1}).
	\]
	Then
	\[
	a^n\to a^\infty\quad\hbox{in }C([0,T];X^{s-1}).
	\]
\end{lemma}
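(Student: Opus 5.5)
The plan is the standard ``Friedrichs/Bona--Smith'' stability argument for transport equations: split the data and the coefficient into a regular part and a remainder. The regular part is handled by an energy estimate in a lower norm plus interpolation. The remainder is handled by the uniform bound of Proposition~\ref{prop:transport} at the level $X^{s-1}$.

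Let $a^\infty$ solve the limiting equation $\partial_ta^\infty+A^\infty\partial_xa^\infty=f$, $a^\infty(0)=a_0$; its existence in $C([0,T];X^{s-1})$ follows from Proposition~\ref{prop:transport}. Note that $\sup_t\|A^\infty\|_{X^s}\le\beta$ also holds, by lower semicontinuity and weak-$*$ compactness. Fix $N$, and let $a^n_N$ solve the same equation with $a_0,f$ replaced by $S_Na_0,S_Nf$. By linearity, $a^n-a^n_N$ solves the equation with data $(1-S_N)a_0$ and forcing $(1-S_N)f$. Proposition~\ref{prop:transport} with $\sigma=s-1$, using $\|A^n\|_{X^{s-1}}\le C\|A^n\|_{X^s}\le C\beta$, then gives
\[
\sup_{t\le T}\|a^n-a^n_N\|_{X^{s-1}}\le Ce^{C\int_0^T\beta}\Big(\|(1-S_N)a_0\|_{X^{s-1}}+\int_0^T\|(1-S_N)f\|_{X^{s-1}}\Big).
\]
This bound is uniform in $n\in\mathbb N\cup\{\infty\}$. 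It tends to $0$ as $N\to\infty$ by \eqref{eq:SN-convergence} and dominated convergence in time; the latter is legitimate because $\|S_Nf\|_{X^{s-1}}\le C\|f\|_{X^{s-1}}$.

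It therefore suffices to show that $a^n_N\to a^\infty_N$ in $C([0,T];X^{s-1})$ for each fixed $N$. By Proposition~\ref{prop:transport} with $\sigma=s$ and \eqref{eq:SN-smoothing}, the $a^n_N$ are bounded in $X^s$ uniformly in $n$, with a bound $C_N$. The difference $w=a^n_N-a^\infty_N$ solves
\[
\partial_tw+A^n\partial_xw=-(A^n-A^\infty)\partial_xa^\infty_N,\qquad w(0)=0.
\]
I estimate $w$ in $L^\infty$, or $L^2$, by a direct energy estimate along characteristics. The coefficient satisfies $\|\partial_xA^n\|_{L^\infty}\le C\beta$ by \eqref{eq:X-embedding}, and the right-hand side satisfies
\[
\|(A^n-A^\infty)\partial_xa^\infty_N\|_{L^\infty}\le C\|A^n-A^\infty\|_{X^{s-1}}\|a^\infty_N\|_{X^s}.
\]
Gronwall then gives $\sup_t\|w\|_{L^\infty}\le C_N\|A^n-A^\infty\|_{L^1_TX^{s-1}}\to0$.

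To upgrade to $X^{s-1}$, I use compactness. For each $t$, the set $\{w(t)\}$ is bounded in $X^s$ and hence relatively compact in $X^{s-1}$ by \eqref{eq:compact-embedding}. Any $X^{s-1}$ limit point must coincide with the $L^\infty$ limit $0$. Equicontinuity in time comes from $\partial_tw$ being bounded in $L^1_TX^{s-1}$, by the equation and \eqref{eq:Moser}; one may upgrade this to $L^\infty$-in-time if $\beta\in L^\infty$, and otherwise use the integrated form. An Arzel\`a--Ascoli argument in $C([0,T];X^{s-1})$ then gives $w\to0$ uniformly. Alternatively, one can interpolate $X^{s-1}$ between $X^{s-\eps'}$ bounds and a low norm such as $B^{-1}_{\infty,\infty}$, where convergence follows from the $L^\infty$ estimate.

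The main obstacle is that last upgrade, from weak convergence to convergence in $X^{s-1}$, uniformly in time, in the endpoint spaces. These include $F^{1}_{1,q}$ and $B^{1/p}_{p,1}$, where $X^{s-1}$ is critical and the $\eps$-room of \eqref{eq:compact-embedding} has to be used carefully. I would rely on the fact that the $X^s$ bound on $w$ is uniform; this gives strong compactness in the $X^{s-1}$ topology, and it is the reason the regularization step to $a^n_N$ was introduced.
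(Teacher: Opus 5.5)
Your argument is correct, and its first half, the splitting $a^n=(a^n-a^n_N)+a^n_N$ with data $S_Na_0$ and forcing $S_Nf$, plus the tail bound from Proposition~\ref{prop:transport} at $\sigma=s-1$ uniform in $n\in\mathbb N\cup\{\infty\}$, is exactly the paper's.

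The difference lies in the fixed-$N$ step. The paper applies Proposition~\ref{prop:transport} at level $\sigma=s-1$ directly to $w=a^n_N-a^\infty_N$. Because $X^{s-1}$ is an algebra,
\[
\|(A^n-A^\infty)\partial_xa^\infty_N\|_{X^{s-1}}\le C\|A^n-A^\infty\|_{X^{s-1}}\|a^\infty_N\|_{X^s}\le C_N\|A^n-A^\infty\|_{X^{s-1}},
\]
and so $\sup_t\|w\|_{X^{s-1}}\le C_N\|A^n-A^\infty\|_{L^1_TX^{s-1}}\to0$ in one line.

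You instead estimate this source only in $L^\infty$. You then recover $X^{s-1}$ convergence from the uniform $X^s$ bound by compactness or interpolation. That works. You do not even need equicontinuity: $\{w^n(t)\}_{n,t}$ is bounded in $X^s$ jointly, hence relatively compact in $X^{s-1}$, and any $X^{s-1}$-limit of $w^{n_k}(t_k)$ must vanish by your $L^\infty$ bound. Still, it is a detour. The $X^{s-1}$ bound on the source, which you already invoke via \eqref{eq:Moser} to get equicontinuity, already carries the factor $\|A^n-A^\infty\|_{X^{s-1}}$, so the source is small in $L^1_TX^{s-1}$.

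In particular, the ``main obstacle'' you flag is not one. At fixed $N$ no criticality issue arises: the regularization's role is to place $\partial_xa^\infty_N$ in $X^{s-1}$ so that the algebra property applies. Also, the compact embedding $X^s\hookrightarrow\hookrightarrow X^{s-1}$ on $\T$ does not care whether $s-1$ is critical.

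As for what each route buys: your compactness version gives no rate, and the interpolation variant gives one only after an extra embedding chain. The paper's route yields a Lipschitz-type bound in $\|A^n-A^\infty\|_{L^1_TX^{s-1}}$ directly.
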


\begin{proof}
	Let \(a_N^n\) solve the same transport equation with data \(S_Na_0\) and
	source \(S_Nf\). Proposition~\ref{prop:transport} gives, uniformly in
	\(n\in\mathbb N\cup\{\infty\}\),
	\begin{equation}\label{eq:stability-tail}
		\|a^n-a_N^n\|_{C_TX^{s-1}}
		\le C_\beta\bigl(
		\|(I-S_N)a_0\|_{X^{s-1}}
		+\|(I-S_N)f\|_{L^1_TX^{s-1}}\bigr)\longrightarrow0.
	\end{equation}
	Here one uses strong low-frequency approximation (\ref{eq:SN-convergence}) and dominated convergence in time.
	
	For fixed \(N\), \(a_N^\infty\in L^\infty(0,T;X^s)\), and
	\(b_N^n=a_N^n-a_N^\infty\) satisfies
	\[
	\partial_tb_N^n+A^n\partial_xb_N^n
	=(A^\infty-A^n)\partial_xa_N^\infty,\qquad b_N^n(0)=0.
	\]
	Since \(X^{s-1}\) is an algebra, another application of the transport estimate
	yields
	\begin{equation}\label{eq:stability-fixed-N}
		\|a_N^n-a_N^\infty\|_{C_TX^{s-1}}
		\le C_N\|A^n-A^\infty\|_{L^1_TX^{s-1}}\longrightarrow0.
	\end{equation}
	The decomposition
	\[
	a^n-a^\infty=(a^n-a_N^n)+(a_N^n-a_N^\infty)
	+(a_N^\infty-a^\infty)
	\]
	and the order of limits \(n\to\infty\), then \(N\to\infty\), finish the proof.
\end{proof}

To prove the norm inflation on torus in Triebel--Lizorkin. The only genuinely periodic point in the atomic construction is that localization and periodization must preserve the Triebel--Lizorkin norm with constants independent of the dyadic scale. We first prove this fact. Let $\pi:\R\to\T$ be the quotient map and, for a function supported in a fundamental interval, define
\[
  (\mathcal Pf)(\pi(x))=\sum_{m\in\Z}f(x+m).
\]

\begin{proposition}[Uniform local periodization]\label{prop:local-periodization}
Let $I\Subset(-1/2,1/2)$, $1\le p<\infty$, $1\le q\le\infty$, and $s\in \R$. There exists a constant $C=C(I,p,q)$ such that, for every $f\in F^s_{p,q}(\R)$ with $\operatorname{supp}f\subset I$,
\begin{equation}\label{eq:periodization-equivalence}
  C^{-1}\|f\|_{F^s_{p,q}(\R)}
  \le \|\mathcal Pf\|_{F^s_{p,q}(\T)}
  \le C\|f\|_{F^s_{p,q}(\R)}.
\end{equation}
In particular, the same constant applies to arbitrary dyadic rescalings whose supports remain in $I$.
\end{proposition}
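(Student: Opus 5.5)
The plan is to compare the periodic Littlewood--Paley pieces $\Delta_j\mathcal Pf$ with the periodization of the real-line pieces $\Delta_j^{\R}f$, where $\Delta^{\R}_j$ uses the same symbols $\chi,\varphi$ as Fourier multipliers on $\R$. By Poisson summation, $\widehat{\mathcal Pf}(k)=\widehat f(k)$ (the real-line Fourier transform sampled at integers, with the $2\pi$ normalization of the paper). Therefore
\[
\Delta_j\mathcal Pf=\mathcal P(\Delta_j^{\R}f),\qquad j\ge-1,
\]
as an exact identity, since both sides have Fourier coefficients $\varphi(2^{-j}k)\widehat f(k)$ (resp. $\chi(k)\widehat f(k)$). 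Here $\mathcal P$ is applied to a Schwartz-tail function rather than a compactly supported one, so the sum converges absolutely. This reduces \eqref{eq:periodization-equivalence} to estimating the tails $\sum_{m\ne0}\Delta_j^{\R}f(x+m)$ for $x$ in a fundamental interval, and showing that $\Delta^{\R}_j f$ restricted to a neighbourhood $I'$ of $I$ carries essentially the whole $F^s_{p,q}(\R)$ norm.

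For the upper bound, fix $x\in[-1/2,1/2)$ and write $\Delta_j\mathcal Pf(x)=\sum_m\Delta_j^{\R}f(x+m)$. By Minkowski in $\ell^q$ and then in $L^p$,
\[
\|\mathcal Pf\|_{F^s_{p,q}(\T)}\le\sum_m\|(2^{js}\Delta_j^{\R}f(\cdot+m))_j\|_{L^p([-1/2,1/2);\ell^q)}.
\]
The sum over $m$ is the $F^s_{p,q}(\R)$ norm split over unit cells, and it needs decay in $|m|$. For $|m|\ge2$ the cell is at distance $\ge1/2$ from $I$. There I would use the kernel bound $|\check\varphi_j(y)|\le C_N2^j(1+2^j|y|)^{-N}$ together with the local-means (or Peetre maximal function) representation of $F^s_{p,q}$, which lets one write $\Delta_j^{\R}f=\Delta_j^{\R}\tilde\Delta_j f$ with $\tilde\Delta_j=\Delta_{j-1}+\Delta_j+\Delta_{j+1}$. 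This gives
\[
|\Delta_j^{\R}f(y)|\le C_N2^{-jN}\operatorname{dist}(y,I)^{-N}\,2^{j}\|\tilde\Delta_jf\|_{L^1(I')}.
\]
The difficulty is that $\tilde\Delta_j f$ is not supported in $I$. So I would instead use a fixed cutoff $\theta\in C_c^\infty(-1/2,1/2)$ with $\theta=1$ on $I$, write $f=\theta f$, and rely on the pointwise multiplier bound $\|\theta g\|_{F^s_{p,q}}\le C\|g\|_{F^s_{p,q}}$ together with its periodic analogue.

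The cleaner route, which I will actually commit to, is to define the localized lifting $Lg=\theta\,(g\circ\pi)$ for $g\in\mathcal D'(\T)$. Then $\mathcal P(\theta f)=\mathcal Pf$ and $L\mathcal Pf=\theta f=f$. The two inequalities then follow from:
\begin{itemize}
\item boundedness of $\mathcal P\circ M_\theta\colon F^s_{p,q}(\R)\to F^s_{p,q}(\T)$;
\item boundedness of $L\colon F^s_{p,q}(\T)\to F^s_{p,q}(\R)$.
\end{itemize}
Both are standard localization facts (Triebel, \emph{Theory of Function Spaces}, periodic spaces), proved via local means with compactly supported kernels of width $2^{-j}$. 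For $j\ge j_0(\theta)$ the kernel does not see the identification $x\sim x+m$ on $\operatorname{supp}\theta$, so the periodic and real local means coincide pointwise. The finitely many low frequencies $j<j_0$ are controlled by $\|f\|_{L^p}$-type terms, and hence by the full norm (for $s<0$, by a lower-order $F^{s-N}$ norm bounded by the full norm). The constants depend only on $\theta$ (hence on $I$), $p$, $q$ and $s$, and never on $f$. This independence from $f$ is exactly the final assertion about dyadic rescalings.

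\textbf{Main obstacle.} The hard step is the switch from the Fourier-multiplier definition \eqref{eq:F-norm} to compactly supported local means on $\T$ with equivalence constants independent of the function, since $\varphi$-multipliers have non-compact kernels. I expect to quote the local-means characterization on both $\R$ and $\T$ and verify only the coincidence of the high-frequency local means on $\operatorname{supp}\theta$. The Schwartz tails of the low-frequency piece I will handle crudely, by an $L^1$ bound of the periodized kernel.
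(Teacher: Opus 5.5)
Your committed route is the paper's proof: the paper takes a cutoff $\chi\in C_c^\infty((-1/2,1/2))$ with $\chi\equiv1$ on $I$ (your $\theta$), bounds $\|\mathcal Pf\|_{F^s_{p,q}(\T)}\lesssim\|\chi f\|_{F^s_{p,q}(\R)}\lesssim\|f\|_{F^s_{p,q}(\R)}$ via the localization characterization of the periodic space, and obtains the reverse inequality from $f=\chi\,((\mathcal Pf)\circ\pi)$ and the multiplier property of $\chi$, which is exactly your boundedness of $\mathcal P\circ M_\theta$ and of $L$. Your local-means sketch only fills in what the paper leaves to the literature, and the Poisson-summation detour is not needed. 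One step should be tightened: the low-frequency part of the lower bound when $s\le0$. There you should bound the finitely many local means of $f$ by pairing $\mathcal Pf$ with the smooth periodized test functions $\mathcal P(\theta k_{j,x})$, where $k_{j,x}$ is the local-means kernel at scale $j$ centred at $x$. Going through a lower-order norm of $f$ instead would presuppose the same statement at lower regularity.
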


\begin{proof}
Choose $\chi \in C_c^\infty((-1/2,1/2))$ such that $\chi\equiv1$ on $I$. Since $\mathcal Pf$ is supported in  $\pi((-1/2,1/2))$, this characterization gives
\[
  \|\mathcal Pf\|_{F^s_{p,q}(\T)}
  \lesssim \|\chi f\|_{F^s_{p,q}(\R)}
  \lesssim \|f\|_{F^s_{p,q}(\R)}.
\]
Conversely, in the chosen coordinate interval,
\[
  f=\chi\big((\mathcal Pf)\circ\pi\big).
\]
The multiplier property of $\chi$ therefore yield
\[
  \|f\|_{F^s_{p,q}(\R)}
  \lesssim \|\mathcal Pf\|_{F^s_{p,q}(\T)}.
\]
This proves the scale-uniform assertion.
\end{proof}
For $x,y\in\T $, let
\[
\operatorname{dist}_{\T}(x,y)
:=
\min_{m\in\Z}|x-y+m|.
\]
We now record the precise synthesis estimate needed below. It follows from Proposition \ref{prop:local-periodization} and the Euclidean smooth atomic synthesis theorem (see \cite{Skrzypczak1998,ZhangYan2026Sharp} and \cite[Section~3.2]{Triebel1992}).

\begin{lemma}[Nested periodic atoms]\label{lem:periodic-atoms}
Let $1\le p<\infty$, $1\le q\le\infty$, and $s=1+1/p$. Fix $c\in(0,1/4)$ and set
\[
  Q_k=\{x\in\T:\operatorname{dist}_{\T}(x,0)\le c2^{-k}\},
  \qquad k\ge 0.
\]
Suppose that $a_k\in C^\infty(\T)$ satisfies
\begin{equation}\label{eq:atom-support}
  \operatorname{supp}a_k\subset Q_k
\end{equation}
and, for some integer $K>s$,
\begin{equation}\label{eq:atom-derivatives}
  \|\partial_x^\ell a_k\|_{L^\infty}
  \le C_\ell 2^{-k+k\ell},
  \qquad 0\le\ell\le K,
\end{equation}
with constants independent of $k$. Then, for every finite scalar sequence $(\lambda_k)$,
\begin{equation}\label{eq:atomic-synthesis}
  \left\|\sum_{k=0}^N\lambda_ka_k\right\|_{F^s_{p,q}(\T)}
  \le C
  \left\|
  \left(\sum_{k=0}^N
  |\lambda_k2^{k/p}\mathbf1_{Q_k}|^q\right)^{1/q}
  \right\|_{L^p(\T)},
\end{equation}
with the usual supremum modification for $q=\infty$.
\end{lemma}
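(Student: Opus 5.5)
The plan is to transfer the statement to the real line with Proposition \ref{prop:local-periodization} and then apply the Euclidean smooth atomic synthesis theorem.

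\textbf{Step 1: lift to $\R$.} Since $c<1/4$, every $Q_k$ lies in $\pi([-1/4,1/4])$. Each $a_k$ therefore lifts to a function $\tilde a_k\in C_c^\infty(\R)$ supported in $\tilde Q_k=[-c2^{-k},c2^{-k}]\subset I:=[-1/4,1/4]\Subset(-1/2,1/2)$, with $\mathcal P\tilde a_k=a_k$. The function $F=\sum_{k\le N}\lambda_k\tilde a_k$ is supported in $I$ and satisfies $\mathcal PF=\sum\lambda_ka_k$. The upper bound in \eqref{eq:periodization-equivalence} then gives
\[
\Big\|\sum_{k=0}^N\lambda_ka_k\Big\|_{F^s_{p,q}(\T)}\le C(I,p,q)\,\|F\|_{F^s_{p,q}(\R)},
\]
with a constant that does not depend on $N$, $k$ or $(\lambda_k)$.

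\textbf{Step 2: identify the $\tilde a_k$ as Euclidean atoms.} The Euclidean smooth atomic synthesis theorem (\cite[Section~3.2]{Triebel1992}, \cite{Skrzypczak1998}) uses $(s,p)$-atoms attached to dyadic intervals $Q_{\nu m}$ of side $2^{-\nu}$. These atoms satisfy $\operatorname{supp}\subset dQ_{\nu m}$ and $|D^\ell a|\le 2^{-\nu(s-1/p)}2^{\nu\ell}$ for $\ell\le K$. No moment conditions are needed when $s>\sigma_{p}=\max(0,1/p-1)$, which holds here since $s=1+1/p$.

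For our atoms $s-1/p=1$, so \eqref{eq:atom-derivatives} is exactly this normalization, up to the fixed constants $C_\ell$. The support $\tilde Q_k$ is covered by the enlargement $dQ_{k,0}\cup dQ_{k,-1}$ of two adjacent dyadic intervals. To fit the one-atom-per-cube format, split $\tilde a_k=\tilde a_k\psi_{k}+\tilde a_k(1-\psi_k)$ using a fixed smooth partition rescaled to scale $2^{-k}$; the derivative bounds are preserved uniformly in $k$. Alternatively, one can use the version with supports in $d Q$, which allows $\tilde Q_k\subset dQ_{k,0}$ with $d$ fixed depending on $c$.

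\textbf{Step 3: sum the atomic norm.} The synthesis theorem gives
\[
\|F\|_{F^s_{p,q}(\R)}\le C\Big\|\Big(\sum_{k}|\lambda_k\chi^{(p)}_{k}|^q\Big)^{1/q}\Big\|_{L^p(\R)},
\]
where $\chi^{(p)}_k=2^{k/p}\mathbf 1_{Q_{k,0}\cup Q_{k,-1}}$. The intervals $Q_{k,0}\cup Q_{k,-1}=[-2^{-k},2^{-k}]$ are comparable to $\tilde Q_k$ but larger. To pass back to $\mathbf 1_{\tilde Q_k}$ we use the Fefferman--Stein vector-valued maximal inequality. Pointwise, $\mathbf 1_{[-2^{-k},2^{-k}]}\le C_c\,(M\mathbf 1_{\tilde Q_k})^{1/r}$ for any $0<r<\min(p,q)$, because on the larger interval the maximal function of $\mathbf 1_{\tilde Q_k}$ is bounded below by a constant depending on $c$. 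The maximal inequality in $L^{p/r}(\ell^{q/r})$ then yields
\[
\|F\|_{F^s_{p,q}(\R)}\le C\Big\|\Big(\sum_{k}|\lambda_k2^{k/p}\mathbf 1_{\tilde Q_k}|^q\Big)^{1/q}\Big\|_{L^p(\R)}.
\]
This step needs $p<\infty$, which is assumed, and works for all $q\le\infty$. Finally, all sets $\tilde Q_k$ lie in one fundamental interval, so the $L^p(\R)$ norm equals the $L^p(\T)$ norm of the right-hand side of \eqref{eq:atomic-synthesis}.

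\textbf{Main obstacle.} The delicate point is Step 2. One must check that the normalization and the support enlargement in the Euclidean atomic theorem match \eqref{eq:atom-support}--\eqref{eq:atom-derivatives} with constants uniform in $k$, and that no moment conditions are required at $s=1+1/p$. The endpoint case $q=\infty$ also has to be handled in the Fefferman--Stein step; this is fine because we only need the maximal inequality in $L^{p/r}(\ell^\infty)$ with $p/r>1$.
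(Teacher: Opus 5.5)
Your proposal is correct and follows the paper's route: lift the $a_k$ to $\widetilde a_k\in C_c^\infty(\R)$ supported in one fundamental interval, apply the Euclidean smooth atomic synthesis theorem (where $s-1/p=1$ matches the derivative normalization and no moment conditions are needed), and return to $\T$ through the upper bound in Proposition~\ref{prop:local-periodization}. Your extra steps, placing the supports inside (enlarged) dyadic intervals and replacing their indicators by $\mathbf 1_{\widetilde Q_k}$ via the Fefferman--Stein inequality in $L^{p/r}(\ell^{q/r})$, make explicit a point the paper passes over when it writes the atomic norm directly in terms of $|\widetilde Q_k|^{-1/p}\mathbf 1_{\widetilde Q_k}$.
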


\begin{proof}
	Let
	\[
	I_0:=\left(-\frac12,\frac12\right),
	\qquad
	U:=\pi(I_0)\subset\T.
	\]
For each $k\ge0$, let $\widetilde Q_k\subset I_0$ be the unique lift
	of $Q_k$ under $\pi|_{I_0}$, and define
	\[
	\widetilde a_k(x)
	:=
	\begin{cases}
		a_k(\pi(x)),&x\in I_0,\\
		0,&x\notin I_0.
	\end{cases}
	\]
	Because $\operatorname{supp}a_k\subset Q_k\Subset U$, the function
	$a_k\circ\pi$ vanishes in a neighborhood of the endpoints
	$\pm1/2$. Hence its extension by zero is smooth, and therefore
	\[
	\widetilde a_k\in C_c^\infty(\R),
	\qquad
	\operatorname{supp}\widetilde a_k
	\subset\widetilde Q_k,
	\qquad
	|\widetilde Q_k|\asymp 2^{-k}.
	\]
	Since the coordinate map is the identity in the chosen coordinate
	variable, the derivative assumptions on $a_k$ give
	\[
	\|\partial_x^\ell\widetilde a_k\|_{L^\infty(\R)}
	\le C_\ell 2^{-k+k\ell},
	\qquad 0\le\ell\le K.
	\]
	Let $\widetilde g_N=\sum_{k=0}^N\lambda_k\widetilde a_k$, the Euclidean smooth atomic synthesis theorem yields
	\begin{align}
		\|\widetilde g_N\|_{F^s_{p,q}(\R)}
		\lesssim
		\left\|
		\left(
		\sum_{k=0}^N
		\left|
		\lambda_k
		|\widetilde Q_k|^{-1/p}
		\mathbf1_{\widetilde Q_k}
		\right|^q
		\right)^{1/q}
		\right\|_{L^p(\R)}                                                    
		\lesssim
		\left\|
		\left(
		\sum_{k=0}^N
		\left|
		\lambda_k2^{k/p}\mathbf1_{\widetilde Q_k}
		\right|^q
		\right)^{1/q}
		\right\|_{L^p(\R)}.
		\label{eq:Euclidean-atomic-synthesis}
	\end{align}
	Since the intervals are nested,
	\[
	\widetilde Q_k\subset\widetilde Q_0,
	\qquad 0\le k\le N,
	\]
	and hence
	\[
	\operatorname{supp}\widetilde g_N
	\subset\widetilde Q_0
	\Subset I_0.
	\]
	Thus Proposition~\ref{prop:local-periodization} can be applied to the
	whole finite sum $\widetilde g_N$, thus obtaining the estimate. 
\end{proof}

\section{some estimates for periodic nonlocal operator}

The nonlocal operator $P_{\T}(D)$ is of order $-1$. Hence the smooth Fourier multiplier theorem in Besov and Triebel--Lizorkin spaces yields the following lifting by one derivative; see \cite{RunstSickel1996,Triebel1983}.

\begin{lemma}[Multiplier estimate]\label{lem:multiplier}
For all spaces occurring in Theorem \ref{thm:main},
\begin{equation}\label{eq:multiplier-est}
  \|P_{\T}(D)f\|_{X^s}\le C\|f\|_{X^{s-1}}.
\end{equation}
\end{lemma}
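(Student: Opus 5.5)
The plan is to write $P_{\T}(D)=\partial_x\circ\Lambda^{-2}$ with a minus sign, where $\Lambda^{-2}=(1-\partial_x^2)^{-1}_{\T}$. The operator is a Fourier multiplier on $\Z$ with symbol
\[
m(k)=-\frac{2\pi i k}{1+4\pi^2k^2}.
\]
We then reduce \eqref{eq:multiplier-est} to the statement that the order-zero operator $\Lambda P_{\T}(D)$ is bounded on $X^{s-1}$, together with a lifting identity. Concretely, we split $f=\Delta_{-1}f+(I-\Delta_{-1})f$.

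\emph{Low frequencies.} On the support of $\chi$ only the frequencies $|k|\le 1$ occur. So $P_{\T}(D)\Delta_{-1}f$ is a trigonometric polynomial of degree at most one, with coefficients bounded by $|\widehat f(k)|\lesssim\|\Delta_{-1}f\|_{L^1}$. Its $X^s$ norm is therefore controlled by $\|\Delta_{-1} f\|_{L^1}\lesssim\|f\|_{X^{s-1}}$. The last inequality holds for both the Besov and the Triebel--Lizorkin norms, since the $j=-1$ term is part of either norm and $L^p(\T)\hookrightarrow L^1(\T)$.

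\emph{High frequencies.} For $j\ge0$ we use the almost-orthogonality $\Delta_j=\Delta_j\widetilde\Delta_j$, where
\[
\widetilde\Delta_j=\Delta_{j-1}+\Delta_j+\Delta_{j+1}.
\]
Then
\[
2^{js}\Delta_jP_{\T}(D)f=2^{j(s-1)}\,T_j\,\widetilde\Delta_j f,
\qquad T_j:=2^{j}m(D)\widetilde\varphi(2^{-j}D),
\]
where $\widetilde\varphi$ is a smooth bump equal to $1$ on the annulus $\{\tfrac38\le|\xi|\le\tfrac{16}3\}$. The rescaled symbols $\xi\mapsto 2^{j}m(2^j\xi)\widetilde\varphi(\xi)$ are smooth, supported in a fixed annulus, and have all derivatives bounded uniformly in $j$, since $2^jm(2^j\xi)\to -i\xi^{-1}/(2\pi)$ smoothly there.

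\emph{The Besov case.} By the periodic Mikhlin/Bernstein lemma (periodization of the kernel $2^j\check\psi_j(2^j\cdot)$ with uniformly bounded $L^1$ norm), we get $\|T_j g\|_{L^p}\le C\|g\|_{L^p}$ uniformly in $j$. Summing in $\ell^1$ then gives the estimate for $B^{1+1/p}_{p,1}$.

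\emph{The Triebel--Lizorkin case.} We need the pointwise bound
\[
|T_j\widetilde\Delta_jf(x)|\lesssim \mathcal M(\widetilde\Delta_j f)(x),
\]
where $\mathcal M$ is a Peetre-type maximal function, together with the Fefferman--Stein vector-valued maximal inequality on $\T$. This is valid for $1<p<\infty$. At $p=1$, and when $q=1$, one instead uses the Peetre maximal function $\sup_y |g(x-y)|/(1+2^j|y|)^a$ with $a>1/\min(p,q)$, which is bounded on $L^p(\ell^q)$ for all $0<p<\infty$. This reproduces the standard Fourier multiplier theorem for $F^s_{p,q}$ (\cite{Triebel1983}), transferred to $\T$ through uniform kernel bounds.

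Since this whole argument is the classical multiplier theorem, in the write-up I would simply cite \cite{RunstSickel1996,Triebel1983}, after checking that the symbol $(1+|\xi|^2)^{1/2}\xi(1+\xi^2)^{-1}$ satisfies Hörmander--Mikhlin bounds of order $0$ to all orders. The main obstacle is the endpoint $p=1$ of the $F$-scale (case (iii)). There Fefferman--Stein fails, so the uniform Peetre-maximal control must be verified for the periodized kernels. I would handle this by noting that the periodized kernel at scale $2^{-j}$ decays like $(1+2^j\operatorname{dist}_\T(x,0))^{-M}$ for every $M$, uniformly in $j\ge0$.
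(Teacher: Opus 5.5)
Your proposal is correct and is essentially the paper's argument: the paper only observes that $P_{\T}(D)$ is a smooth multiplier of order $-1$ and cites the Fourier multiplier/lifting theorem in \cite{RunstSickel1996,Triebel1983}, and your uniform periodized-kernel bounds together with the Peetre maximal inequality (which also covers $p=1$ and $q=1$) simply unpack that theorem on $\T$. One cosmetic fix: the block identity should read $2^{js}\Delta_jP_{\T}(D)f=2^{j(s-1)}T_j\Delta_jf$, because $T_j\widetilde\Delta_jf$ as you wrote it is not localized to the $j$-th block.
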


Define
\begin{equation}\label{eq:N-def}
  \cN_{\T}(u)=P_{\T}(D)\left(u^2+\frac12u_x^2\right),
\end{equation}
we have the following Nonlinear estimates:
\begin{lemma}\label{lem:nonlinear}
For $u\in X^s(\T)$,
\begin{equation}\label{eq:N-bound}
  \|\cN_{\T}(u)\|_{X^s}\le C\|u\|_{X^s}^2.
\end{equation}
If $u,v$ belong to a ball of radius $R$ in $X^s(\T)$, then
\begin{equation}\label{eq:N-Lipschitz}
  \|\cN_{\T}(u)-\cN_{\T}(v)\|_{X^s}\le C_R\|u-v\|_{X^s}.
\end{equation}
\end{lemma}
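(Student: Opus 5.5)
The plan is to reduce both estimates to Lemma~\ref{lem:multiplier} combined with the algebra property of $X^{s-1}(\T)$ and the embeddings \eqref{eq:X-embedding}. Set $F(u):=u^2+\frac12u_x^2$, so that $\cN_{\T}(u)=P_{\T}(D)F(u)$. By \eqref{eq:multiplier-est},
\[
\|\cN_{\T}(u)\|_{X^s}\le C\|F(u)\|_{X^{s-1}},
\]
and it therefore suffices to show $\|F(u)\|_{X^{s-1}}\le C\|u\|_{X^s}^2$, together with the corresponding Lipschitz bound.

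For the first estimate, recall that in every case of Theorem~\ref{thm:main} both $u$ and $u_x$ belong to $X^{s-1}(\T)$. Indeed, $\|\partial_x u\|_{X^{s-1}}\le C\|u\|_{X^s}$ because $\partial_x$ is a Fourier multiplier of order one; the lowest block $\Delta_{-1}$ causes no difficulty on $\T$. Likewise $\|u\|_{X^{s-1}}\le C\|u\|_{X^s}$ by the trivial embedding. Since $X^{s-1}$ is a Banach algebra (the consequence of \eqref{eq:Moser} and \eqref{eq:X-embedding} recorded in the preceding lemma), we obtain
\[
\|u^2\|_{X^{s-1}}\le C\|u\|_{X^{s-1}}^2,
\qquad
\|u_x^2\|_{X^{s-1}}\le C\|u_x\|_{X^{s-1}}^2 .
\]
Both right-hand sides are bounded by $C\|u\|_{X^s}^2$, and inserting this into the multiplier bound gives \eqref{eq:N-bound}.

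For \eqref{eq:N-Lipschitz}, factor the differences:
\[
u^2-v^2=(u-v)(u+v),
\qquad
u_x^2-v_x^2=(u_x-v_x)(u_x+v_x).
\]
The algebra property then yields
\[
\|F(u)-F(v)\|_{X^{s-1}}\le C\bigl(\|u-v\|_{X^{s-1}}\|u+v\|_{X^{s-1}}+\|\partial_x(u-v)\|_{X^{s-1}}\|\partial_x(u+v)\|_{X^{s-1}}\bigr).
\]
The right-hand side is at most $C\cdot 2R\,\|u-v\|_{X^s}$ when $u,v$ lie in the ball of radius $R$. Applying Lemma~\ref{lem:multiplier} once more gives the claim with $C_R=CR$.

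The only step that needs real care is the algebra property of $X^{s-1}$ in the endpoint cases. In case (i), $X^{s-1}=B^{1/p}_{p,1}$; in case (iii), $X^{s-1}=F^1_{1,q}$. In both settings $L^\infty$ is not automatic from regularity counting alone, and one must use precisely the embedding $X^{s-1}\hookrightarrow L^\infty$ from \eqref{eq:X-embedding}. The Moser estimate \eqref{eq:Moser} with $\sigma=s-1>0$ then gives
\[
\|fg\|_{X^{s-1}}\le C\|f\|_{X^{s-1}}\|g\|_{X^{s-1}},
\]
and since both facts are already stated in the preceding lemma, no further work is required. One should also check that Lemma~\ref{lem:multiplier} applies to $X^{s-1}\to X^s$ uniformly across the three cases. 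On $\T$ this holds because the symbol $-2\pi ik/(1+4\pi^2k^2)$ is the restriction to $\Z$ of a smooth symbol of order $-1$.
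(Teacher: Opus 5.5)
Your proposal is correct and follows essentially the same route as the paper. You use Lemma~\ref{lem:multiplier} to reduce to an $X^{s-1}$ bound on $u^2+\frac12u_x^2$, close that bound with the algebra property of $X^{s-1}$ (from \eqref{eq:Moser} and $X^{s-1}\hookrightarrow L^\infty$), and get the Lipschitz estimate from the polarization identities $u^2-v^2=(u-v)(u+v)$ and $u_x^2-v_x^2=(u_x-v_x)(u_x+v_x)$.
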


\begin{proof}
By Lemma \ref{lem:multiplier}, the embedding \eqref{eq:X-embedding}, and the algebra estimate \eqref{eq:Moser},
\[
  \|\cN_{\T}(u)\|_{X^s}
  \lesssim \left\|u^2+\frac12u_x^2\right\|_{X^{s-1}}
  \lesssim \|u\|_{X^s}^2.
\]
The difference estimate follows from the same bound after the polarization identities
\[
  u^2-v^2=(u-v)(u+v),\qquad
  u_x^2-v_x^2=(u_x-v_x)(u_x+v_x).
\]
\end{proof}

The periodic Green kernel of $(1-\partial_x^2)^{-1}_{\T}$ is
\begin{equation}\label{eq:pT-def}
  p_{\T}(x)=\sum_{m\in\Z}\frac12 e^{-|x+m|}.
\end{equation}
For $x\in[0,1)$,
\begin{equation}\label{eq:pT-cosh}
  p_{\T}(x)=\frac{\cosh(x-1/2)}{2\sinh(1/2)}.
\end{equation}
Equivalently, for all $x\in\R$,
\begin{equation}\label{eq:pT-cosh-global}
  p_{\T}(x)=\frac{\cosh(x-\lfloor x\rfloor-1/2)}{2\sinh(1/2)}.
\end{equation}
Let
\begin{equation}\label{eq:KT-def}
  K_{\T}(x)=-p_{\T}'(x).
\end{equation}
Then
\begin{equation}\label{eq:p-K-conv}
  (1-\partial_x^2)^{-1}_{\T}f=p_{\T}*f,
  \qquad P_{\T}(D)f=K_{\T}*f.
\end{equation}

Let $u$ be periodic and let $y(t,\xi)$ be the degree-one lift of the Lagrangian flow,
\begin{equation}\label{eq:Lagrange-flow}
  \frac{d}{dt}y(t,\xi)=u(t,y(t,\xi)),
  \qquad y(0,\xi)=\xi,
\end{equation}
with
\begin{equation}\label{eq:degree-one}
  y(t,\xi+1)=y(t,\xi)+1.
\end{equation}
Set
\begin{equation}\label{eq:U-def}
  U(t,\xi)=u(t,y(t,\xi)).
\end{equation}
If $u_x\in L^1(0,T;L^\infty)$, then
\begin{equation}\label{eq:y-xi-formula}
  y_\xi(t,\xi)=\exp\left(\int_0^t u_x(\tau,y(\tau,\xi))\dd\tau\right)>0.
\end{equation}
In particular, take $T$ small enough, there are constants $0<c_0<C_0$ such that
\begin{equation}\label{eq:yxi-bounds}
  0<c_0\le y_\xi(t,\xi)\le C_0.
\end{equation}

The map $y$ itself is not periodic. Rather, $y(t,\xi)-\xi$ is periodic. Likewise, $U$ and $y_\xi$ are periodic. The lift is used only to keep the order structure of the real line.
 
 For $c>0$ define
\begin{equation}\label{eq:Ec-def}
  E_c(z)=\sum_{m\in\Z}e^{-c|z+m|}.
\end{equation}
Then we have the following periodic estimates.

\begin{lemma}[Kernel perturbation]\label{lem:kernel-perturbation}
Let $y_i$($i=1,2$) be the degree-one lift of the Lagrangian flow defined in (\ref{eq:Lagrange-flow}). Set $Y=y_1-y_2$. Then for some $C>0$ and $\xi,\eta\in\R$,
\begin{align}
&|p_{\T}(y_1(\xi)-y_1(\eta))-p_{\T}(y_2(\xi)-y_2(\eta))| \notag\\
&\hspace{3cm}\le C E_c(\xi-\eta)(|Y(\xi)|+|Y(\eta)|),\label{eq:p-kernel-perturb}
\end{align}
and
\begin{align}
&|K_{\T}(y_1(\xi)-y_1(\eta))-K_{\T}(y_2(\xi)-y_2(\eta))| \notag\\
&\hspace{3cm}\le C E_c(\xi-\eta)(|Y(\xi)|+|Y(\eta)|).\label{eq:K-kernel-perturb}
\end{align}
\end{lemma}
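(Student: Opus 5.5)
The plan is to use the order structure of degree-one monotone lifts together with the explicit formula \eqref{eq:pT-cosh-global}. The weight $E_c$ turns out to be harmless on the torus. For every $z\in\R$ there is $m\in\Z$ with $|z+m|\le 1/2$, so $E_c(z)\ge e^{-c/2}>0$. Hence it suffices to prove both estimates with $E_c(\xi-\eta)$ replaced by a constant, i.e.
\[
|p_{\T}(a_1)-p_{\T}(a_2)|+|K_{\T}(a_1)-K_{\T}(a_2)|\le C\,|a_1-a_2|,
\qquad a_i:=y_i(\xi)-y_i(\eta),
\]
since $|a_1-a_2|\le |Y(\xi)|+|Y(\eta)|$ by the triangle inequality. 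Any $c>0$ can then be used, with $C$ depending on $c$.

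For $p_{\T}$ the estimate is immediate. By \eqref{eq:pT-cosh-global}, $p_{\T}$ is continuous and $1$-periodic, and on each interval $[n,n+1]$ it is a translate of $\cosh(\cdot-1/2)/(2\sinh(1/2))$. Its one-sided derivatives are therefore bounded by $\sinh(1/2)/(2\sinh(1/2))=1/2$. So $p_{\T}$ is globally Lipschitz, and \eqref{eq:p-kernel-perturb} follows.

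For $K_{\T}$, which jumps at the integers, I would use the key observation that $a_1$ and $a_2$ always lie in the same closed cell $[n,n+1]$, with $n$ determined by $\xi-\eta$ alone. Both $y_i$ are strictly increasing by \eqref{eq:y-xi-formula} and satisfy \eqref{eq:degree-one}. So if $n<\xi-\eta<n+1$ with $n\in\Z$, then
\[
y_i(\eta)+n=y_i(\eta+n)<y_i(\xi)<y_i(\eta+n+1)=y_i(\eta)+n+1,
\]
that is, $a_i\in(n,n+1)$ for $i=1,2$. If instead $\xi-\eta=n\in\Z$, then $a_1=a_2=n$ and the difference vanishes, whatever convention is used for $K_{\T}$ at the integers.

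On the open cell $(n,n+1)$ we have
\[
K_{\T}(x)=-\frac{\sinh(x-n-1/2)}{2\sinh(1/2)},
\]
which is smooth with $|K_{\T}'|\le \cosh(1/2)/(2\sinh(1/2))$ there. The mean value theorem on the segment between $a_1$ and $a_2$, which stays inside $(n,n+1)$ by convexity of the interval, gives $|K_{\T}(a_1)-K_{\T}(a_2)|\le C|a_1-a_2|$, and \eqref{eq:K-kernel-perturb} follows.

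The only genuine point is this same-cell argument, which is exactly why the degree-one lift rather than the projected flow is used. If one wants a bound that truly decays in $\xi-\eta$, for instance for a real-line analogue, one can refine using \eqref{eq:yxi-bounds}: $a_i$ is comparable to $\xi-\eta$, so the sinh/cosh derivatives decay in the distance. On $\T$ this refinement is unnecessary, because of the lower bound on $E_c$.
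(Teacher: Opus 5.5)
Your proof is correct, but it follows a different route from the paper's.

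\textbf{The paper's route.} The paper works termwise. It writes $p_{\T}$ and $K_{\T}$ as periodizations of the real-line kernels $\frac12 e^{-|z|}$ and $\frac12\operatorname{sign}(z)e^{-|z|}$. It then uses the degree-one property in the form $y_i(\xi)-y_i(\eta)+m=y_i(\xi+m)-y_i(\eta)$. With the quantitative bound $y_\xi\ge c_0$ from \eqref{eq:yxi-bounds}, this gives, for each $m$, a lower bound $c_0|\xi-\eta+m|$ on both arguments, and the two arguments have the same sign. The mean value theorem then yields a decaying factor $e^{-c|\xi-\eta+m|}$ in every term, and summing over $m$ produces $E_c$.

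\textbf{Your route.} You note that $E_c\ge e^{-c/2}$ on $\T$; in fact $E_c\asymp 1$ there. So the weight carries no information, and you reduce everything to Lipschitz bounds for the closed forms \eqref{eq:pT-cosh-global}: a global bound for $p_{\T}$ and a cell-wise bound for $K_{\T}$. Your same-cell observation is the summed version of the paper's per-term sign coincidence. The signs of $y_1(\xi)-y_1(\eta)+m$ and $y_2(\xi)-y_2(\eta)+m$ agree for every $m\in\Z$ exactly when your $a_1,a_2$ lie in a common open cell $(n,n+1)$ or coincide at an integer.

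\textbf{What each buys.} Your argument is shorter and gives explicit constants. It also needs less: only strict monotonicity $y_\xi>0$ from \eqref{eq:y-xi-formula}, rather than the uniform lower bound $c_0$, and for $p_{\T}$ no monotonicity at all. The paper's argument is a verbatim periodization of the real-line estimate and produces genuine decay in $\xi-\eta+m$. That decay would be essential on $\R$, or for kernels without a convenient closed form. The price is that it needs $c_0$, with $c$ tied to $c_0$.
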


\begin{proof}
We prove \eqref{eq:p-kernel-perturb}. By \eqref{eq:pT-def}, set
\[
  a_m=y_1(\xi)-y_1(\eta)+m,
  \qquad b_m=y_2(\xi)-y_2(\eta)+m.
\]
Using the degree-one property,
\[
  a_m=y_1(\xi+m)-y_1(\eta),
  \qquad b_m=y_2(\xi+m)-y_2(\eta).
\]
Since $(y_i)_\xi\ge c_0$,
\[
  |a_m|\ge c_0|\xi-\eta+m|,
  \qquad |b_m|\ge c_0|\xi-\eta+m|.
\]
Moreover, strict monotonicity gives
\[
  \operatorname{sign}(a_m)=\operatorname{sign}(b_m)
  =\operatorname{sign}(\xi-\eta+m),
\]
except when $\xi-\eta+m=0$, in which case $a_m=b_m=0$. Hence every point on the
line segment joining $a_m$ and $b_m$ has absolute value at least
$c_0|\xi-\eta+m|$. Since
\[
  |a_m-b_m|=|Y(\xi)-Y(\eta)|\le |Y(\xi)|+|Y(\eta)|,
\]
the mean value theorem yields
\[
  |e^{-|a_m|}-e^{-|b_m|}|
  \le C e^{-c|\xi-\eta+m|}(|Y(\xi)|+|Y(\eta)|).
\]
Summing in $m$ gives \eqref{eq:p-kernel-perturb}.

For $K_{\T}$, write 
\[
K_{\T}(z)
=
\sum_{m\in\Z}K_{\R}(z+m),
\qquad
K_{\R}(z)
=
\frac12\operatorname{sign}(z)e^{-|z|}.
\]
Using the degree-one property, the sign factors in
$K_{\R}$ coincide, and
\begin{align*}
	&\left|
	K_{\R}\bigl(y_1(\xi)-y_1(\eta)+m\bigr)
	-
	K_{\R}\bigl(y_2(\xi)-y_2(\eta)+m\bigr)
	\right|
	\\
	&\qquad=
	\frac12
	\left|
	e^{-|y_1(\xi)-y_1(\eta)+m|}
	-
	e^{-|y_2(\xi)-y_2(\eta)+m|}
	\right|.
\end{align*}
Thus by the same mean-value estimate used above and summing over $m\in\Z$ gives the desired
estimate for $K_{\T}$.
\end{proof}
For a degree-one lift $y$ and \(U(\xi)=u(y(\xi))\), by the change of variables over one
period, we set $	A=U^2y_\eta+\frac12\frac{U_\eta^2}{y_\eta}$ and
\begin{equation}\label{eq:Ky-def}
  \cK_y A(\xi)=\int_{\T}K_{\T}(y(\xi)-y(\eta))A(\eta)\dd\eta,
\end{equation}
\begin{equation}\label{eq:Py-def}
  \cP_y A(\xi)=\int_{\T}p_{\T}(y(\xi)-y(\eta))A(\eta)\dd\eta.
\end{equation}

\begin{lemma}[Lagrangian Lipschitz estimate]\label{lem:Lagrange-Lipschitz}
Let $1\le r\le\infty$. Write $y_i(\xi)=\xi+Y_i(\xi)$ and assume that $U_i,Y_i$ are uniformly bounded in $W^{1,\infty}(\T)\cap W^{1,r}(\T)$. Then
\begin{equation}\label{eq:K-Lipschitz-Lr}
  \|\cK_{y_1}A_1-\cK_{y_2}A_2\|_{L^r}
  \le C\big(\|U_1-U_2\|_{W^{1,r}}+\|Y_1-Y_2\|_{W^{1,r}}\big),
\end{equation}
and
\begin{equation}\label{eq:K-Lipschitz-W1r}
  \|\cK_{y_1}A_1-\cK_{y_2}A_2\|_{W^{1,r}}
  \le C\big(\|U_1-U_2\|_{W^{1,r}}+\|Y_1-Y_2\|_{W^{1,r}}\big).
\end{equation}
\end{lemma}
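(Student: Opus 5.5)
We split the difference as
\[
\cK_{y_1}A_1-\cK_{y_2}A_2
=\int_{\T}\bigl[K_{\T}(y_1(\xi)-y_1(\eta))-K_{\T}(y_2(\xi)-y_2(\eta))\bigr]A_1(\eta)\dd\eta
+\int_{\T}K_{\T}(y_2(\xi)-y_2(\eta))\bigl(A_1-A_2\bigr)(\eta)\dd\eta=:I_1+I_2 .
\]
The uniform $W^{1,\infty}$ bounds on $U_i,Y_i$ give $c_0\le (y_i)_\eta\le C_0$, which we take as part of the standing hypotheses, as in \eqref{eq:yxi-bounds}. Hence $A_i=U_i^2(y_i)_\eta+\frac12U_{i,\eta}^2/(y_i)_\eta$ is bounded in $L^\infty$. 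Expanding the products termwise then gives
\[
|A_1-A_2|\lesssim |U_1-U_2|+|\partial_\eta(U_1-U_2)|+|\partial_\eta(Y_1-Y_2)|,
\]
so $\|A_1-A_2\|_{L^r}\lesssim \|U_1-U_2\|_{W^{1,r}}+\|Y_1-Y_2\|_{W^{1,r}}$.

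\textbf{The $L^r$ bound \eqref{eq:K-Lipschitz-Lr}.} Since $|K_{\T}|\le C$, Young's inequality on $\T$ gives $\|I_2\|_{L^r}\lesssim\|A_1-A_2\|_{L^r}$. For $I_1$ we use \eqref{eq:K-kernel-perturb}:
\[
|I_1(\xi)|\lesssim |Y(\xi)|\,\|A_1\|_{L^1}+\bigl(E_c*|Y|\bigr)(\xi)\,\|A_1\|_{L^\infty}.
\]
Because $E_c\in L^1(\T)$, this yields $\|I_1\|_{L^r}\lesssim\|Y\|_{L^r}$.

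\textbf{The $W^{1,r}$ bound \eqref{eq:K-Lipschitz-W1r}.} Here we need a formula for $\partial_\xi\cK_yA$, obtained by undoing the Lagrangian change of variables. Write $\cK_yA(\xi)=(K_{\T}*f)(y(\xi))$, where $f\circ y\,y_\eta=A$. Then $\partial_x(K_{\T}*f)=\partial_x P_{\T}(D)f$. Using $-\partial_x^2(1-\partial_x^2)^{-1}=1-(1-\partial_x^2)^{-1}$, this equals $f-p_{\T}*f$. Pulling back gives
\[
\partial_\xi(\cK_yA)(\xi)=y_\xi(\xi)\Bigl(\frac{A(\xi)}{y_\xi(\xi)}-\cP_yA(\xi)\Bigr)=A(\xi)-y_\xi(\xi)\,\cP_yA(\xi).
\]
This identity holds for smooth data and extends by density. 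It avoids differentiating the jump of $K_{\T}$.

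With this formula,
\[
\partial_\xi(\cK_{y_1}A_1-\cK_{y_2}A_2)=(A_1-A_2)-(y_{1,\xi}-y_{2,\xi})\,\cP_{y_1}A_1-y_{2,\xi}\bigl(\cP_{y_1}A_1-\cP_{y_2}A_2\bigr).
\]
The first term was bounded above. In the second, $\cP_{y_1}A_1$ is bounded in $L^\infty$, so the term is controlled by $\|Y_\xi\|_{L^r}$. The third term is handled exactly like the $L^r$ estimate, now using \eqref{eq:p-kernel-perturb} in place of \eqref{eq:K-kernel-perturb}. Combining these with the $L^r$ bound gives \eqref{eq:K-Lipschitz-W1r}.

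\textbf{Main obstacle.} The delicate step is justifying the derivative identity rigorously in the periodic degree-one setting. The change of variables $x=y(\eta)$ over one period must be legitimate, which it is because $y$ is an increasing homeomorphism of degree one. The identity must also hold when $A$ is only in $L^r$, which we get by approximating with smooth $U,Y$ and passing to the limit using the $L^r$ bounds already proved. The endpoint $r=1$ causes no difficulty, since only Young's inequality with $L^1$ kernels and $L^\infty$ multipliers is used.
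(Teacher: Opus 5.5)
Your proposal is correct and follows the paper's proof essentially step by step. Both arguments split the difference into a kernel-difference term and an $A_1-A_2$ term, control the former with the kernel perturbation bounds \eqref{eq:K-kernel-perturb} and \eqref{eq:p-kernel-perturb}, and derive the derivative identity $\partial_\xi\cK_yA=A-y_\xi\cP_yA$ by pulling back $\partial_xP_{\T}(D)q=q-p_{\T}*q$. The differences are cosmetic: you place $A_1$ rather than $A_2$ on the kernel difference, and you use the crude bound $|K_{\T}|\le C$ instead of $CE_c$. The lower bound $y_\xi\ge c_0$ does not follow from the stated $W^{1,\infty}$ bounds, so you are right to treat it as a standing hypothesis, exactly as the paper implicitly does via \eqref{eq:yxi-bounds}.
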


\begin{proof}
First,
\begin{equation}\label{eq:A-diff-bound}
  \|A_1-A_2\|_{L^r}
  \le C\big(\|U_1-U_2\|_{W^{1,r}}+\|y_1-y_2\|_{W^{1,r}}\big).
\end{equation}
This follows by expanding
\[
  U_1^2(y_1)_\xi-U_2^2(y_2)_\xi
  =(U_1-U_2)(U_1+U_2)(y_1)_\xi+U_2^2((y_1)_\xi-(y_2)_\xi),
\]
and
\[
 \frac{(U_1)_\xi^2}{(y_1)_\xi}-\frac{(U_2)_\xi^2}{(y_2)_\xi}
 =\frac{((U_1)_\xi-(U_2)_\xi)((U_1)_\xi+(U_2)_\xi)}{(y_1)_\xi}
 +(U_2)_\xi^2\frac{(y_2)_\xi-(y_1)_\xi}{(y_1)_\xi (y_2)_\xi}.
\]

Next decompose
\[
  \cK_{y_1}A_1-\cK_{y_2}A_2
  =\cK_{y_1}(A_1-A_2)+(\cK_{y_1}-\cK_{y_2})A_2.
\]
Since $|K_{\T}(y_1(\xi)-y_1(\eta))|\le CE_c(\xi-\eta)$, Young's inequality on $\T$ yields
\[
  \|\cK_{y_1}(A_1-A_2)\|_{L^r}\le C\|A_1-A_2\|_{L^r}.
\]
For the kernel difference, Lemma \ref{lem:kernel-perturbation} gives
\[
 |(\cK_{y_1}-\cK_{y_2})A_2(\xi)|
 \le C|Y(\xi)|(E_c*|A_2|)(\xi)+C E_c*(|Y||A_2|)(\xi),
\]
where $Y=Y_1-Y_2=y_1-y_2$. Since $A_2$ is uniformly bounded in $L^\infty\cap L^r$,
\[
  \|(\cK_{y_1}-\cK_{y_2})A_2\|_{L^r}\le C\|Y\|_{L^r}.
\]
This proves \eqref{eq:K-Lipschitz-Lr}.

It remains to estimate the derivative. Define
\[
  q_y(x)=\frac{A(y^{-1}(x))}{y_\xi(y^{-1}(x))}.
\]
Then
\[
  \cK_yA(\xi)=P_{\T}(D)q_y(y(\xi)).
\]
The distributional identity
\[
  \partial_xP_{\T}(D)q_y
  =q_y-p_{\T}*q_y
\]
follows from $(1-\partial_x^2)p_{\T}=\delta_{\T}$. Hence the chain rule gives
\begin{align*}
  \partial_\xi\cK_yA(\xi)
  &=y_\xi(\xi)\big(q_y(y(\xi))-(p_{\T}*q_y)(y(\xi))\big)\\
  &=A(\xi)-y_\xi(\xi)\cP_yA(\xi).
\end{align*}
Thus
\begin{equation}\label{eq:derivative-identity}
  \partial_\xi \cK_yA=A-y_\xi\cP_yA.
\end{equation}
Therefore
\[
  \partial_\xi(\cK_{y_1}A_1-\cK_{y_2}A_2)
  =(A_1-A_2)-\big((y_1)_\xi\cP_{y_1}A_1-(y_2)_\xi\cP_{y_2}A_2\big).
\]
We split
\begin{align*}
 &(y_1)_\xi\cP_{y_1}A_1-(y_2)_\xi\cP_{y_2}A_2\\
 &\quad=((y_1)_\xi-(y_2)_\xi)\cP_{y_1}A_1
 +(y_2)_\xi\cP_{y_1}(A_1-A_2)
 +(y_2)_\xi(\cP_{y_1}-\cP_{y_2})A_2.
\end{align*}
The estimates for $\cP_y$ are the same as those for $\cK_y$, using \eqref{eq:p-kernel-perturb}. Combining these bounds with \eqref{eq:A-diff-bound} gives \eqref{eq:K-Lipschitz-W1r}.
\end{proof}

\section{Well-posedness}
 Throughout this section, $p$ denotes the spatial integrability
 index of the chosen space $X^s$; in particular, $p=1$ when
 $X^s=F^2_{1,q}(\T)$.
\subsection{Existence}
\leavevmode\par
\medskip
Let $u_0\in X^s(\T)$ and set $u_0^n=S_nu_0$. By the classical smooth periodic
Camassa--Holm theory \cite{GuoLiuMolinetYin2019,RodriguezBlanco2001}, there is a
smooth solution $u^n$ on an interval $[0,T_n)$ with initial value $u_0^n$. We first
obtain a lifespan and bounds independent of $n$. Applying Proposition
\ref{prop:transport} to the equation satisfied by $u^n$ and using Lemma
\ref{lem:nonlinear}, we obtain, for $t<T_n$,
\begin{equation}\label{eq:smooth-approx-bound}
  \|u^n(t)\|_{X^s}
  \le e^{C\int_0^t\|u^n(\tau)\|_{X^s}\dd\tau}
  \left(\|u_0^n\|_{X^s}
  +C\int_0^t\|u^n(\tau)\|_{X^s}^2\dd\tau\right).
\end{equation}
Since $\|u_0^n\|_{X^s}\le C\|u_0\|_{X^s}$, a standard continuity argument gives a
time $T=T(\|u_0\|_{X^s})>0$ such that
\begin{equation}\label{eq:uniform-Xs}
  \sup_n\|u^n\|_{L^\infty(0,\min\{T,T_n\};X^s)}
  \le C\|u_0\|_{X^s}.
\end{equation}
The bound \eqref{eq:uniform-Xs} holds on the common interval $[0,T]$.
Indeed, smooth solutions obey, for every integer $m\ge3$,
\begin{equation}\label{eq:smooth-Hm-continuation}
  \frac{d}{dt}\|u^n(t)\|_{H^m}
  \le C\bigl(\|u^n(t)\|_{L^\infty}
  +\|u_x^n(t)\|_{L^\infty}\bigr)\|u^n(t)\|_{H^m}.
\end{equation}
If $T_n<T$, then \eqref{eq:uniform-Xs} and
$X^s\hookrightarrow W^{1,\infty}$ make the coefficient in
\eqref{eq:smooth-Hm-continuation} integrable on $[0,T_n)$, so all $H^m$
norms remain finite and the classical smooth theory
\cite{GuoLiuMolinetYin2019,RodriguezBlanco2001} extends $u^n$ beyond $T_n$, a
contradiction. Hence $T_n\ge T$. The equation (\ref{eq:main-CH}) and Lemma~\ref{lem:nonlinear}
also give
\begin{equation}\label{eq:uniform-time-derivative}
  \sup_n\|\partial_tu^n\|_{L^\infty(0,T;X^{s-1})}\le C.
\end{equation}

Consequently the sequence is Lipschitz in time with values in $X^{s-1}$ and, by
interpolation, uniformly H\"older continuous with values in $X^{s-\eps}$. Fix
\[
  0<\eps<\min\{1,s-1\};
\]
then $X^{s-\eps}(\T)\hookrightarrow W^{1,p}(\T)$. (\ref{eq:compact-embedding}) in Lemma
\ref{lem:low-frequency} and Arzel\`a--Ascoli theorem give, after extraction,
\begin{equation}\label{eq:compact-conv}
  u^n\to u\quad\text{in }C([0,T];X^{s-\eps}(\T))
  \hookrightarrow C([0,T];W^{1,p}(\T)).
\end{equation}
By the Fatou property of $X^s(\T)$ (see \cite{Triebel1983,Triebel1992,RunstSickel1996}) and \eqref{eq:uniform-Xs},
\begin{equation}\label{eq:limit-fatou}
  \|u\|_{L^\infty_TX^s}
  \le \liminf_{n\to\infty}\|u^n\|_{L^\infty_TX^s}
  \le C\|u_0\|_{X^s}.
\end{equation}
We now pass to the nonlinear limit. Set
\[
  q^n=(u^n)^2+\frac12(u_x^n)^2,
  \qquad q=u^2+\frac12u_x^2.
\]
Using \eqref{eq:compact-conv} and the uniform $W^{1,\infty}$ bound from
\eqref{eq:uniform-Xs},
\begin{align*}
  \|q^n-q\|_{C_TL^p}
  &\lesssim \|u^n-u\|_{C_TL^p}
       \|u^n+u\|_{L^\infty_{t,x}}\\
  &\quad+\|u_x^n-u_x\|_{C_TL^p}
       \|u_x^n+u_x\|_{L^\infty_{t,x}}
  \longrightarrow0.
\end{align*}
Since $P_{\T}(D)f=K_{\T}*f$ with $K_{\T}\in L^1(\T)$, Young's inequality yields
\[
  \cN_{\T}(u^n)=K_{\T}*q^n
  \longrightarrow K_{\T}*q=\cN_{\T}(u)
  \quad\text{in }C([0,T];L^p(\T)).
\]
Moreover,
\begin{align*}
  \|u^n\partial_xu^n-u\partial_xu\|_{C_TL^p}
  &\le \|u^n-u\|_{C_TL^p}\|\partial_xu^n\|_{L^\infty_{t,x}}\\
  &\quad+\|u\|_{L^\infty_{t,x}}
       \|\partial_xu^n-\partial_xu\|_{C_TL^p}
  \longrightarrow0.
\end{align*}
Thus one may pass to the limit in the smooth equations in the sense of distributions,
and $u$ solves \eqref{eq:main-CH} with $u(0)=u_0$. By \eqref{eq:limit-fatou},
$u\in L^1(0,T;X^s)$, and Lemma \ref{lem:nonlinear} gives
$\cN_{\T}(u)\in L^1(0,T;X^s)$. Hence Proposition \ref{prop:transport}, applied to
the distributional limit equation with coefficient $a=u$ and source
$g=\cN_{\T}(u)$, yields
\[
  u\in C([0,T];X^s(\T)).
\]
The map $u\mapsto\cN_{\T}(u)$ is locally Lipschitz in $X^s$, while
$u\mapsto u\partial_xu$ is continuous from $X^s$ to $X^{s-1}$. Therefore the
right-hand side of \eqref{eq:main-CH} belongs to $C([0,T];X^{s-1}(\T))$, and
$u_t\in C([0,T];X^{s-1}(\T))$. Thus the asserted solution regularity holds.

\subsection{Uniqueness}
\leavevmode\par
\medskip
Let $u_1,u_2\in C([0,T];X^s)$ be two solutions. Let $y_i$ be the corresponding degree-one Lagrangian lifts and set
\[
  U_i(t,\xi)=u_i(t,y_i(t,\xi)),
  \qquad Y_i(t,\xi)=y_i(t,\xi)-\xi.
\]
Then $U_i,Y_i$ are uniformly bounded in $W^{1,p}\cap W^{1,\infty}$ and 
\begin{equation}\label{eq:Lagrange-system}
  \partial_t y_i=U_i,
  \qquad \partial_t U_i=\widetilde{\cN}_i,
\end{equation}
where
\[
  \widetilde{\cN}_i(\xi)=\cN_{\T}(u_i)(y_i(\xi))=
  \cK_{y_i}A_i(\xi),
\]
with
\[
  A_i=U_i^2(y_i)_\xi+\frac12\frac{(U_i)_\xi^2}{(y_i)_\xi}.
\]
By Lemma \ref{lem:Lagrange-Lipschitz}, for $r=p$ and $r=\infty$,
\begin{equation}\label{eq:Lagrange-N-Lip}
  \|\widetilde{\cN}_1-\widetilde{\cN}_2\|_{W^{1,r}}
  \le C\big(\|U_1-U_2\|_{W^{1,r}}+\|y_1-y_2\|_{W^{1,r}}\big).
\end{equation}
Set 
\begin{equation}\label{eq:D-def}
  D(t)=\|U_1-U_2\|_{W^{1,p}\cap W^{1,\infty}}
       +\|Y_1-Y_2\|_{W^{1,p}\cap W^{1,\infty}}.
\end{equation}
Using \eqref{eq:Lagrange-system} and \eqref{eq:Lagrange-N-Lip},
\[
  D(t)\le D(0)+C\int_0^tD(\tau)\dd\tau.
\]
Gronwall's inequality gives $D(t)\le e^{Ct}D(0)$. If $u_1(0)=u_2(0)$, then $D(0)=0$, hence $U_1=U_2$ and $y_1=y_2$. Since $y_i(t,\cdot)$ is a torus diffeomorphism, $u_1=u_2$.

\subsection{Continuous dependence}
\leavevmode\par
\medskip
Let $u_0^n\to u_0^\infty$ in $X^s(\T)$, and let $u^n,u^\infty$ be the corresponding solutions. From sections 4.1 and 4.2, we have
\begin{equation}\label{eq:CLp-conv}
  u^n\to u^\infty\quad\text{in }C([0,T];L^p(\T)).
\end{equation}
We upgrade this to $X^{s-1}$. Set $f_n=u^n-u^\infty$. For fixed $N$,
\begin{equation}\label{eq:lowfreq-Xsminus1}
  \|S_Nf_n\|_{X^{s-1}}\le C_N\|f_n\|_{L^p}\to0
\end{equation}
in $C([0,T])$. On the other hand,
\begin{equation}\label{eq:highfreq-Xsminus1}
  \|(I-S_N)f_n\|_{X^{s-1}}
  \le C2^{-N}\|f_n\|_{X^s}\le C2^{-N}.
\end{equation}
First choose $N$ large, then $n$ large. Thus
\begin{equation}\label{eq:Xsminus1-conv}
  u^n\to u^\infty\quad\text{in }C([0,T];X^{s-1}(\T)).
\end{equation}

Let
\[
  v^n=u_x^n,
  \qquad v^\infty=u_x^\infty,
  \qquad v_0^n=\partial_xu_0^n,
  \qquad v_0^\infty=\partial_xu_0^\infty.
\]
Differentiating \eqref{eq:main-CH},
\begin{equation}\label{eq:vn-eqn}
  \partial_t v^n+u^n\partial_xv^n=\partial_x\cN_{\T}(u^n)-(v^n)^2.
\end{equation}
Define $w^n$ by
\begin{equation}\label{eq:wn-eqn}
  \left\{
  \begin{aligned}
    &\partial_t w^n+u^n\partial_xw^n=
       \partial_x\cN_{\T}(u^\infty)-(v^\infty)^2,\\
    &w^n(0)=v_0^\infty.
  \end{aligned}\right.
\end{equation}
By Lemma \ref{lem:transport-stability} and \eqref{eq:Xsminus1-conv},
\begin{equation}\label{eq:wn-conv}
  w^n\to v^\infty\quad\text{in }C([0,T];X^{s-1}).
\end{equation}
Set
\[
  z^n=v^n-w^n.
\]
Then
\begin{equation}\label{eq:zn-eqn}
  \partial_t z^n+u^n\partial_x z^n
  =\partial_x(\cN_{\T}(u^n)-\cN_{\T}(u^\infty))-\big((v^n)^2-(v^\infty)^2\big),
\end{equation}
with
\[
  z^n(0)=v_0^n-v_0^\infty.
\]
By (\ref{eq:N-Lipschitz}), the two nonlinear differences in \eqref{eq:zn-eqn} satisfy 
\begin{align}
  \left\|\partial_x\bigl(\cN_{\T}(u^n)-\cN_{\T}(u^\infty)\bigr)\right\|_{X^{s-1}}
  &\le C_R\|u^n-u^\infty\|_{X^s},\label{eq:nonlocal-derivative-difference}\\
  \|(v^n)^2-(v^\infty)^2\|_{X^{s-1}}
  &\le C_R\|v^n-v^\infty\|_{X^{s-1}}.\label{eq:square-difference}
\end{align}
 Together with
\[
  \|u^n-u^\infty\|_{X^s}
  \lesssim \|u^n-u^\infty\|_{X^{s-1}}
  +\|v^n-v^\infty\|_{X^{s-1}},
\]
Proposition \ref{prop:transport} applied to \eqref{eq:zn-eqn} yields
\begin{align}\label{eq:zn-ineq}
  \|z^n(t)\|_{X^{s-1}}
  &\le C\Bigg(\|v_0^n-v_0^\infty\|_{X^{s-1}}
  +\int_0^t\|u^n-u^\infty\|_{X^{s-1}}\dd\tau\notag\\
  &\qquad +\int_0^t\|w^n-v^\infty\|_{X^{s-1}}\dd\tau
  +\int_0^t\|z^n(\tau)\|_{X^{s-1}}\dd\tau\Bigg),
\end{align}
because $v^n-v^\infty=(w^n-v^\infty)+z^n$.
The first three terms on the right of \eqref{eq:zn-ineq} tend to zero, by convergence of the data, \eqref{eq:Xsminus1-conv} and \eqref{eq:wn-conv}. Gronwall's inequality yields
\begin{equation}\label{eq:zn-conv}
  z^n\to0\quad\text{in }C([0,T];X^{s-1}).
\end{equation}
Consequently,
\begin{equation}\label{eq:vn-conv}
  v^n-v^\infty=(w^n-v^\infty)+z^n\to0
  \quad\text{in }C([0,T];X^{s-1}).
\end{equation}
Combining \eqref{eq:Xsminus1-conv} and \eqref{eq:vn-conv} with the norm equivalence
\begin{equation}\label{eq:norm-equivalence}
  \|f\|_{X^s}\lesssim \|f\|_{X^{s-1}}+\|f_x\|_{X^{s-1}},
\end{equation}
we obtain
\[
  u^n\to u^\infty\quad\text{in }C([0,T];X^s(\T)).
\]
Using the equation, the local Lipschitz continuity of $\cN_{\T}$ in $X^s$, and the continuity of
\[
  u\longmapsto u\partial_xu
  \quad\text{from }X^s\text{ to }X^{s-1},
\]
we further obtain
\begin{align*}
  \|\partial_tu^n-\partial_tu^\infty\|_{C_TX^{s-1}}
  &\le \|u^n\partial_xu^n-u^\infty\partial_xu^\infty\|_{C_TX^{s-1}}\\
  &\quad+\|\cN_{\T}(u^n)-\cN_{\T}(u^\infty)\|_{C_TX^{s-1}}
  \longrightarrow0.
\end{align*}
Hence the data-to-solution map is continuous into
$C([0,T];X^s)\cap C^1([0,T];X^{s-1})$.
This completes the proof of Theorem \ref{thm:main}.

\section{Norm inflation in the periodic critical Triebel--Lizorkin spaces}\label{sec:norm-inflation}

In this section we prove Theorem \ref{thm:norm-inflation}. The construction is the periodic realization of the smooth atomic mechanism introduced in \cite{ZhangYan2026Sharp}. The wave-breaking and logarithmic continuation arguments are the periodic ones used in \cite{GuoLiuMolinetYin2019}.

\subsection{A nested periodic atomic construction}
\leavevmode\par
\medskip
Fix an even function
\[
  \eta\in C_c^\infty((-1/16,1/16)),
  \qquad \eta(0)=1.
\]
For $k\ge 0$, define 
\begin{equation}\label{eq:periodic-atom-def}
 \widetilde a_k(x):=-x\eta(2^kx),
 \qquad x\in\R,
\end{equation}
and let $a_k=\mathcal P\widetilde{a_k}(x)$. Since $\widetilde{a_k}$ vanishes in a neighbourhood of the endpoints, the extension $a_k$ belongs to $C^\infty(\T)$. Moreover, $a_k$ is odd,
\begin{equation}\label{eq:atom-derivative-origin}
  a_k'(0)=-1,
\end{equation}
and
\[
  \operatorname{supp}a_k\subset Q_k
  :=\{x\in\T:\operatorname{dist}_{\T}(x,0)\le 2^{-k}/16\}.
\]
For every integer $\ell\ge0$,
\begin{equation}\label{eq:ak-derivative-bound}
  \|\partial_x^\ell a_k\|_{L^\infty}
  \le C_\ell2^{-k+k\ell}.
\end{equation}
Thus the family $a_k$ satisfies the hypotheses of Lemma \ref{lem:periodic-atoms}.

For $N\ge 0$, define
\begin{equation}\label{eq:fN-def}
  f_N(x)=\sum_{k=1}^N\frac1k a_k(x).
\end{equation}
Then $f_N$ is a smooth real-valued odd periodic function and
\begin{equation}\label{eq:fN-derivative}
  f_N'(0)=-\sum_{k=1}^N\frac1k\longrightarrow-\infty.
\end{equation}

\begin{lemma}\label{lem:fN-uniform}
Let $1<p<\infty$, $1\le q\le\infty$, and $s=1+1/p$. There exists $C=C(p,q,\eta)$ such that
\begin{equation}\label{eq:fN-uniform}
  \sup_{N\ge 0}\|f_N\|_{F^s_{p,q}(\T)}\le C.
\end{equation}
\end{lemma}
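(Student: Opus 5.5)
The plan is to apply Lemma~\ref{lem:periodic-atoms} directly to $f_N=\sum_{k=1}^N k^{-1}a_k$ with $\lambda_k=1/k$ (and $\lambda_0=0$), and then compute the resulting square-function norm explicitly using the nested structure of the sets $Q_k$. The hypotheses of Lemma~\ref{lem:periodic-atoms} have already been checked: $\operatorname{supp}a_k\subset Q_k$ with $c=1/16\in(0,1/4)$, and \eqref{eq:ak-derivative-bound} gives \eqref{eq:atom-derivatives} for any fixed integer $K>s$, with constants depending only on $\eta$. Hence
\[
  \|f_N\|_{F^s_{p,q}(\T)}\le C\left\|\left(\sum_{k=1}^N \bigl|k^{-1}2^{k/p}\mathbf 1_{Q_k}\bigr|^q\right)^{1/q}\right\|_{L^p(\T)},
\]
with $C$ independent of $N$. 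The whole question is therefore reduced to bounding this right-hand side uniformly in $N$.

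Next I split $\T$ into the dyadic shells $Q_j\setminus Q_{j+1}$ for $j\ge1$, plus the complement of $Q_1$ where the function vanishes. On a shell $Q_j\setminus Q_{j+1}$, only the indices $k\le j$ contribute, since the sets are nested. There the inner expression is at most the $\ell^q$ norm of $(k^{-1}2^{k/p})_{k\le j}$. This is a geometrically growing sequence (up to the harmless factor $k^{-1}$), so its $\ell^q$ norm, including the $q=\infty$ case, is bounded by $C_{p,q}\,j^{-1}2^{j/p}$. Here I use $p<\infty$ so that the ratio $2^{1/p}>1$, and I absorb the polynomial factor by comparing $k^{-1}2^{k/p}$ with $j^{-1}2^{j/p}$ for $k\le j$; indeed $k^{-1}2^{k/p}\le C\,j^{-1}2^{j/p}2^{-(j-k)/(2p)}$. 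Since $|Q_j\setminus Q_{j+1}|\le|Q_j|\lesssim 2^{-j}$, the $p$-th power of the $L^p$ norm is bounded by
\[
  C\sum_{j\ge1} j^{-p}2^{j}\cdot 2^{-j}=C\sum_{j\ge1}j^{-p}<\infty,
\]
and this sum is finite precisely because $p>1$. The sets with $j>N$ only make the sum smaller, so the bound is uniform in $N$, which proves \eqref{eq:fN-uniform}.

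The main point is the shell computation. It is where the restriction $1<p<\infty$ enters: $p>1$ is needed for the convergence of $\sum j^{-p}$, and $p<\infty$ is needed for the geometric domination in $k$. It is also why the choice $\lambda_k=1/k$ works even though $f_N'(0)$ diverges like $\log N$. I expect no real obstacle beyond being careful that the geometric-sum constant is uniform for all $1\le q\le\infty$. To handle this, I first bound the $\ell^q$ norm by the $\ell^1$ norm, which is legitimate since $q\ge1$, and then sum the geometric series once.
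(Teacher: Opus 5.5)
Your proposal is correct and follows the paper's proof: apply Lemma~\ref{lem:periodic-atoms} with $\lambda_k=1/k$, split into the nested shells $Q_j\setminus Q_{j+1}$, bound the inner $\ell^q$ sum there by $C\,2^{j/p}/j$ using the geometric growth of $2^{k/p}/k$, and sum $\sum_j j^{-p}<\infty$ using $p>1$. Two of your steps are slightly more careful versions of the paper's: the explicit comparison $k^{-1}2^{k/p}\le C\,j^{-1}2^{j/p}2^{-(j-k)/(2p)}$, and treating every shell $j\ge1$ uniformly in place of the paper's separate handling of $Q_N$.
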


\begin{proof}
By Lemma \ref{lem:periodic-atoms},
\begin{equation}\label{eq:fN-sequence-bound}
  \|f_N\|_{F^s_{p,q}}
  \le C\|\Lambda_N\|_{L^p},
\end{equation}
where
\[
  \Lambda_N(x)
  =\left(\sum_{k=1}^N
  \left(\frac{2^{k/p}}{k}\mathbf1_{Q_k}(x)\right)^q
  \right)^{1/q},
\]
with the usual supremum when $q=\infty$. Put
\[
  A_j=Q_j\setminus Q_{j+1},
  \qquad 0\le j<N.
\]
If $x\in A_j$, then $x\in Q_k$ exactly for $k\le j$. Since $2^{k/p}/k$ grows geometrically for large $k$,
\begin{equation}\label{eq:geometric-sequence}
  \left(\sum_{k=1}^j
  \left(\frac{2^{k/p}}k\right)^q\right)^{1/q}
  \le C\frac{2^{j/p}}j,
\end{equation}
again with the evident modification for $q=\infty$. Since $|A_j|\asymp2^{-j}$ and $|Q_N|\asymp2^{-N}$,
\begin{align*}
  \|\Lambda_N\|_{L^p}^p
  &\le C\sum_{j=1}^{N-1}
     2^{-j}\left(\frac{2^{j/p}}j\right)^p
     +C2^{-N}\left(\frac{2^{N/p}}N\right)^p\\
  &\le C\sum_{j=1}^\infty\frac1{j^p}<\infty.
\end{align*}
The last series converges precisely because $p>1$. This proves \eqref{eq:fN-uniform}.
\end{proof}

\medskip
\noindent\textit{Choice of the initial data.}
Let $C_*=\sup_N\|f_N\|_{F^s_{p,q}}$ and set
\[
  \alpha_\varepsilon=\frac{\varepsilon}{2C_*}.
\]
By \eqref{eq:fN-derivative}, one may choose $N=N(\varepsilon)$ so large that
\[
  \sum_{k=1}^{N}\frac1k>
  \frac{4C_*}{\varepsilon^2}.
\]
Define
\begin{equation}\label{eq:u0eps-def}
  u_{0,\varepsilon}=\alpha_\varepsilon f_{N(\varepsilon)}.
\end{equation}
Then
\begin{equation}\label{eq:u0eps-properties}
  \|u_{0,\varepsilon}\|_{F^s_{p,q}}
  \le\frac\varepsilon2,
  \qquad
  u_{0,\varepsilon}'(0)<-\frac2\varepsilon.
\end{equation}
The function $u_{0,\varepsilon}$ is smooth, real-valued and odd.

\subsection{Blow-up of the critical Triebel--Lizorkin norm}
\leavevmode\par
\medskip
We first recall the periodic wave-breaking estimate and Sobolev blow-up alternative needed here. The proof in \cite{GuoLiuMolinetYin2019} is already formulated for both $\R$ and $\T$. 

\begin{lemma}[Odd-data lifespan bound]\label{lem:odd-blowup}
Let $u_0\in H^3(\T)$ be real-valued and odd, with $u_0'(0)<0$. Then the corresponding smooth periodic solution has a finite maximal lifespan $T$ satisfying
\begin{equation}\label{eq:lifespan-slope}
  T\le\frac{2}{|u_0'(0)|}.
\end{equation}
\end{lemma}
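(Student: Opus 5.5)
The plan is to follow the characteristic argument at the origin that \cite{GuoLiuMolinetYin2019} uses for odd data. The first step is to show that oddness is preserved: if $u$ solves \eqref{eq:main-CH}, then so does $-u(t,-x)$. The kernel $K_{\T}$ is odd and $u^2+\frac12u_x^2$ is even whenever $u$ is odd, so the nonlocal term maps odd functions to odd functions. Uniqueness of smooth solutions in $H^3(\T)$ then forces $u(t,\cdot)$ to stay odd on the whole maximal interval $[0,T)$. As a consequence $u(t,0)=0$, the characteristic issued from $0$ stays at $0$, and the quantity $m(t):=u_x(t,0)$ is well defined and $C^1$ on $[0,T)$.

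Next, I would differentiate \eqref{eq:main-CH} in $x$ and use $\partial_xP_{\T}(D)f=f-p_{\T}*f$, which is the identity already used in the proof of Lemma~\ref{lem:Lagrange-Lipschitz}. This gives
\[
u_{tx}+uu_{xx}=-\tfrac12u_x^2+u^2-p_{\T}*\Big(u^2+\tfrac12u_x^2\Big).
\]
Evaluating at $x=0$ and using $u(t,0)=0$ yields
\[
m'(t)=-\tfrac12 m(t)^2-\Big(p_{\T}*\big(u^2+\tfrac12u_x^2\big)\Big)(t,0).
\]
Since $p_{\T}>0$ by \eqref{eq:pT-cosh} and the convolved function is nonnegative, this gives $m'\le-\frac12m^2$.

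With $m(0)=u_0'(0)<0$, the Riccati comparison shows that $m$ stays negative and $\frac{d}{dt}(1/m)\ge \frac12$. Hence
\[
\frac1{m(t)}\ge\frac1{m(0)}+\frac t2,
\]
so $m(t)\to-\infty$ no later than $t=2/|u_0'(0)|$. If $T$ were larger than $2/|u_0'(0)|$, the solution would be smooth, and in particular $\|u_x\|_{L^\infty}$ would be bounded, on $[0,2/|u_0'(0)|]$, which is a contradiction. Therefore $T\le 2/|u_0'(0)|$, and in particular $T$ is finite.

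I expect the main obstacle to be justifying rigorously the smooth local theory in $H^3(\T)$ on which the argument rests. Three points need care: the existence of the maximal solution, the continuation criterion (finite $T$ occurs only if $\|u_x\|_{L^\infty}$ blows up, which is the content of \eqref{eq:smooth-Hm-continuation}), and enough regularity to differentiate $u_x(t,0)$ in time (for $H^3$ data, $u\in C([0,T);H^3)\cap C^1([0,T);H^2)$, so $u_x(\cdot,0)$ is $C^1$). These are classical and can be quoted from \cite{ConstantinEscher1998,RodriguezBlanco2001}. Only the sign of the periodic Green function $p_{\T}$ is specific to the torus, and it is explicit.
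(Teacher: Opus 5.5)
Your proposal is correct and takes the same approach the paper relies on: the paper gives no proof of its own and defers to \cite{GuoLiuMolinetYin2019}, whose argument is this oddness-plus-Riccati computation for $m(t)=u_x(t,0)$, with positivity of the periodic Green function $p_{\T}$ as the only torus-specific input. One minor point: the continuation criterion you list among the delicate steps is never used, since assuming $T>2/|u_0'(0)|$ already contradicts the finiteness of $m$ at $t=2/|u_0'(0)|$.
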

Applying Lemma \ref{lem:odd-blowup} to \eqref{eq:u0eps-properties}, the maximal lifespan $T_\varepsilon$ satisfies
\begin{equation}\label{eq:Teps-small}
  T_\varepsilon\le\frac2{|u_{0,\varepsilon}'(0)|}<\varepsilon.
\end{equation}
\begin{lemma} \label{5.3}
 Given $u_0(x) \in H^s, s>3 / 2$, there exists a maximal $T \geq \tilde{T}\left(\left\|u_0\right\|_{H^{\frac{3}{2}+}}\right)>0$ and a unique solution $u$ to (\ref{eq:main-CH}) such that
	
	$$
	u \in C\left([0, T) ; H^s\right) \cap C^1\left([0, T) ; H^{s-1}\right) .
	$$
	Moreover, the solution depends continuously on the initial data, i.e. for any $T^{\prime}<T$ the mapping $u_0 \mapsto u: H^s \mapsto C\left(\left[0, T^{\prime}\right] ; H^s\right) \cap C^1\left(\left[0, T^{\prime}\right] ; H^{s-1}\right)$ is continuous on a $H^s$-neighborhood of $u_0$, and if $T<\infty$, then $\lim _{t \rightarrow T^{-}}\|u(t)\|_{H^s}=\infty$.
\end{lemma}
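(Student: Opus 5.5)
The plan is to prove Lemma~\ref{5.3} by the classical energy method for the nonlocal form \eqref{eq:main-CH}, with $\Lambda^s=(1-\partial_x^2)^{s/2}$ on $\T$. The basic a priori estimate comes from applying $\Lambda^s$ to \eqref{eq:main-CH} and pairing with $\Lambda^su$ in $L^2(\T)$. The transport term is handled by writing $\Lambda^s(uu_x)=u\Lambda^su_x+[\Lambda^s,u]\partial_xu$. The Kato--Ponce commutator estimate and one integration by parts then give
\[
\bigl|(\Lambda^s(uu_x),\Lambda^su)_{L^2}\bigr|\le C\|u_x\|_{L^\infty}\|u\|_{H^s}^2 .
\]
By Lemma~\ref{lem:multiplier} in the $H^s=B^s_{2,2}$ scale and the Moser estimate \eqref{eq:Moser},
\[
\|\cN_{\T}(u)\|_{H^s}\le C\|u^2+\tfrac12u_x^2\|_{H^{s-1}}\le C\|u\|_{W^{1,\infty}}\|u\|_{H^s}.
\]
Together these yield
\begin{equation*}
\frac{d}{dt}\|u(t)\|_{H^s}\le C\bigl(\|u\|_{L^\infty}+\|u_x\|_{L^\infty}\bigr)\|u(t)\|_{H^s},
\end{equation*}
which is the analogue of \eqref{eq:smooth-Hm-continuation} for real $s>3/2$. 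We first prove this for smooth solutions; it then passes to limits.

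\textbf{Existence.} Mollify the data as $u_0^n=S_nu_0$ and solve with smooth periodic solutions, using the classical smooth theory already invoked in Section~4.1. Using $H^{3/2+\delta}\hookrightarrow W^{1,\infty}$ in the estimate above at the level $\sigma=3/2+\delta$ gives a Riccati-type inequality. This yields a common time $\tilde T=\tilde T(\|u_0\|_{H^{3/2+\delta}})$ and uniform bounds in $L^\infty(0,\tilde T;H^{3/2+\delta})$. Since the differential inequality is linear in $\|u\|_{H^s}$ once $\|u_x\|_{L^\infty}$ is controlled, Gronwall gives uniform $H^s$ bounds on the same interval. Compactness, arguing as in \eqref{eq:compact-conv}--\eqref{eq:limit-fatou} with $X^s=H^s$, produces a limit $u\in L^\infty(0,\tilde T;H^s)$ solving \eqref{eq:main-CH}.

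\textbf{Uniqueness.} For two solutions, estimate $w=u_1-u_2$ in $H^{s-1}$ (or in $L^2$ when $s-1\le 1/2$). We need $\|w u_{2,x}\|_{H^{s-1}}$ and the difference of the nonlocal terms, both bounded through $\|w\|_{H^{s-1}}(\|u_1\|_{H^s}+\|u_2\|_{H^s})$. These follow from the product estimate in $H^{s-1}\times H^{s}$, valid since $s>3/2$. Gronwall then gives $w\equiv0$.

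\textbf{Continuity in time and continuous dependence.} The main obstacle is the continuity of $u$ in time with values in $H^s$ and the continuity of the data-to-solution map in $C([0,T'];H^s)$. The energy method only gives weak continuity and boundedness, and the uniqueness estimate loses a derivative. I would use the Bona--Smith argument. Compare $u$ with the solution $u^N$ emanating from $S_Nu_0$. The difference is small in $H^{s-1}$ at rate $o(2^{-N})$, while $u^N$ is bounded in $H^{s+1}$ by $C2^N$. These two facts control the commutator term in the $H^s$ estimate of $u-u^N$. An alternative is the same splitting as in Lemma~\ref{lem:transport-stability} and Section~4.3: treat $v=u_x$ by a transport equation and apply Proposition~\ref{prop:transport} in $H^{s-1}$. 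This gives $u^N\to u$ in $C([0,T];H^s)$ uniformly for data in a compact set, hence strong continuity. Continuous dependence on any $[0,T']$ with $T'<T$ follows by iterating on short intervals whose length depends only on $\sup_{[0,T']}\|u\|_{H^{3/2+}}$.

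\textbf{Maximal time and blow-up alternative.} Define $T$ as the supremum of existence times. If $T<\infty$ but $\|u(t)\|_{H^s}$ stayed bounded along a sequence $t_k\uparrow T$, then $\|u(t_k)\|_{H^{3/2+}}$ would be bounded. Restarting at $t_k$ with the uniform lifespan $\tilde T$ would extend the solution past $T$, a contradiction. Hence $\liminf_{t\uparrow T}\|u(t)\|_{H^s}=\infty$, and so the limit equals $\infty$.
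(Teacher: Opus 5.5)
Your outline is correct, but it necessarily takes a different route from the paper, because the paper does not prove Lemma~\ref{5.3}. It quotes the classical $H^s$ well-posedness result for $s>3/2$, with reference to Guo--Liu--Molinet--Yin, and uses it only through the $H^2$ blow-up alternative at the end of Section~\ref{sec:norm-inflation}.

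You reconstruct the classical proof:
\begin{enumerate}
\item the energy inequality $\frac{d}{dt}\|u\|_{H^s}\le C\|u\|_{W^{1,\infty}}\|u\|_{H^s}$, from Kato--Ponce, Lemma~\ref{lem:multiplier} and \eqref{eq:Moser};
\item a Riccati bound at the $H^{3/2+\delta}$ level, which gives a lifespan depending only on $\|u_0\|_{H^{3/2+}}$;
\item uniqueness in $H^{s-1}$;
\item Bona--Smith for time continuity and continuous dependence;
\item a restart argument for the blow-up alternative.
\end{enumerate}
This makes the lemma self-contained, at the cost of length.

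Within the paper's own framework, $H^s=F^s_{2,2}$ with $s>3/2$ is case (ii) of Theorem~\ref{thm:main} with $p=q=2$. So existence, uniqueness and continuous dependence on a short interval can already be read off from Section~4; this is your ``alternative'' remark. What Lemma~\ref{5.3} adds beyond Theorem~\ref{thm:main} is the $H^{3/2+}$-dependence of the lifespan and the blow-up alternative. Your energy inequality, linear in $\|u\|_{H^s}$, together with the restart argument is exactly what supplies them.

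A few details should be tightened.
\begin{itemize}
\item Since $s>3/2$, the case $s-1\le 1/2$ in your uniqueness step never occurs.
\item In the $H^{s-1}$ estimate for $w=u_1-u_2$ you do not treat the transport term $u_1w_x$. Plain Kato--Ponce would require $w_x\in L^\infty$, so you need the refined bound $\|[\Lambda^{s-1},u_1]\partial_xw\|_{L^2}\le C\|u_1\|_{H^s}\|w\|_{H^{s-1}}$, which holds for $s>3/2$.
\item The nonlocal difference contains $(u_{1,x}+u_{2,x})w_x$. This needs the product estimate $H^{s-1}\times H^{s-2}\to H^{s-2}$, not one in $H^{s-1}\times H^{s}$.
\item In the Bona--Smith step, the term balanced by $o(2^{-N})\cdot C2^N$ is $w\,\partial_xu^N$, via $\|w\|_{L^\infty}\|u^N\|_{H^{s+1}}\lesssim\|w\|_{H^{s-1}}\|u^N\|_{H^{s+1}}$; it is not a commutator term.
\item The $H^s$ energy estimate is cleanest when run on $u^M-u^N$ with both solutions smooth, proving that the approximations are Cauchy in $C([0,T];H^s)$. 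Running it on $u-u^N$ requires justifying the estimate for a $u$ known only to lie in $L^\infty(0,T;H^s)$.
\end{itemize}
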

We recall the periodic logarithmic estimate.

\begin{lemma}[\text{\cite[Lemma 3.2]{GuoLiuMolinetYin2019}}]\label{lem:log-estimate}
For every $v\in H^2(\T)$,
\begin{equation}\label{eq:periodic-log-estimate}
  \|v_x\|_{L^\infty}
  \le C\left(1+\|v\|_{B^1_{\infty,\infty}}
  \log_{2}\big(2+\|v\|^{2}_{H^2}\big)\right).
\end{equation}
\end{lemma}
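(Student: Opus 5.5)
The plan is to prove the logarithmic estimate \eqref{eq:periodic-log-estimate} by splitting $v_x$ into low and high frequencies at a cutoff $N$ chosen in terms of $\|v\|_{H^2}$. Using the periodic blocks \eqref{eq:periodic-blocks}--\eqref{eq:periodic-high-blocks}, write
\[
v_x=\sum_{j=-1}^{N-1}\Delta_j v_x+\sum_{j\ge N}\Delta_j v_x ,
\]
and estimate each piece in $L^\infty(\T)$.

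\textbf{Low frequencies.} For each $j\ge 0$ we use the periodic Bernstein inequality
\[
\|\Delta_j\partial_x v\|_{L^\infty}\le C2^{j}\|\Delta_j v\|_{L^\infty}.
\]
This holds because $\Delta_j$ is convolution with the periodization of a rescaled Schwartz kernel. The $L^1(\T)$ norm of that periodized kernel, and of its derivative divided by $2^j$, is bounded uniformly in $j$, since periodization does not increase the $L^1$ norm. The block $j=-1$ involves only finitely many Fourier modes, so it is bounded by $C\|v\|_{B^1_{\infty,\infty}}$. By the definition \eqref{eq:Besov-norm} with $r=\infty$, each low block is therefore at most $C\|v\|_{B^1_{\infty,\infty}}$, and summing the $N+1$ blocks gives
\[
\Big\|\sum_{j=-1}^{N-1}\Delta_j v_x\Big\|_{L^\infty}\le C(N+1)\|v\|_{B^1_{\infty,\infty}}.
\]

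\textbf{High frequencies.} Here we bound the sup norm by the Fourier series directly: $\|\Delta_j v_x\|_{L^\infty}\le \sum_k |2\pi k|\,|\widehat{\Delta_j v}(k)|$. On the support of $\varphi(2^{-j}\cdot)$ we have $|k|\sim 2^j$, so Cauchy--Schwarz over these $\sim 2^j$ modes gives
\[
\|\Delta_j v_x\|_{L^\infty}\le C2^{j/2}\cdot 2^{-j}\Big(\sum_{|k|\sim 2^j}|k|^4|\widehat v(k)|^2\Big)^{1/2}\le C2^{-j/2}\|v\|_{H^2}.
\]
Summing over $j\ge N$ yields a geometric series, so the high-frequency part is at most $C2^{-N/2}\|v\|_{H^2}$.

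\textbf{Choice of $N$.} Take $N=\lceil \log_2(2+\|v\|_{H^2}^2)\rceil$. Then $2^{-N/2}\|v\|_{H^2}\le 1$ and $N+1\le C\log_2(2+\|v\|_{H^2}^2)$, since this logarithm is at least $1$. Adding the two pieces gives \eqref{eq:periodic-log-estimate}.

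There is no real obstacle in this argument. The only point needing care is that the periodic Bernstein inequality holds with constants uniform in $j$, which is exactly what the uniform $L^1$ bound on the periodized kernels provides. The condition $v\in H^2$ ensures that the Littlewood--Paley series for $v_x$ converges absolutely and uniformly, so the splitting into low and high frequencies is legitimate.
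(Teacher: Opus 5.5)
Your proof is correct. It is the standard Littlewood--Paley splitting argument (Brezis--Gallou\"et type) behind this logarithmic estimate, which the paper does not prove but imports from \cite[Lemma 3.2]{GuoLiuMolinetYin2019}. One precision is needed in the Bernstein step. As written, your kernel bound gives $\|\partial_x\Delta_j v\|_{L^\infty}\le C2^j\|v\|_{L^\infty}$. To put $\|\Delta_j v\|_{L^\infty}$ on the right-hand side, use a fattened block $\widetilde\Delta_j$ whose symbol equals $1$ on $\operatorname{supp}\varphi(2^{-j}\cdot)$. Then $\Delta_j=\widetilde\Delta_j\Delta_j$, and you convolve $\Delta_j v$ with the periodized derivative kernel of $\widetilde\Delta_j$.
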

  We now finish the proof of Theorem \ref{thm:norm-inflation}. If
\[
  \sup_{0\le t<T_\varepsilon}
  \|u_\varepsilon(t)\|_{B^1_{\infty,\infty}}<\infty,
\]
then Lemma \ref{lem:log-estimate}, together with the standard energy inequality
\[
  \frac{d}{dt}\|u_\varepsilon(t)\|_{H^2}^2
  \lesssim \|u_{\varepsilon,x}(t)\|_{L^\infty}
  \|u_\varepsilon(t)\|_{H^2}^2,
\]
and the Gronwall inequality gives
\[
  \sup_{0\le t<T_\varepsilon}\|u_\varepsilon(t)\|_{H^2}<\infty.
\]
This contradicts the blow-up criteria in Lemma \ref{5.3}. Hence
\begin{equation}\label{eq:Binf-blowup}
  \limsup_{t\uparrow T_\varepsilon}
  \|u_\varepsilon(t)\|_{B^1_{\infty,\infty}}=\infty.
\end{equation}
The critical embedding
\begin{equation}\label{eq:critical-F-embedding}
  F^{1+1/p}_{p,q}(\T)\hookrightarrow B^1_{\infty,\infty}(\T)
\end{equation}
implies that \eqref{eq:Binf-blowup} forces the $F^{1+1/p}_{p,q}$ norm to be unbounded as well. This proves both assertions in \eqref{eq:norm-inflation-main}.

\end{document}